\theoremstyle{definition}
\newtheorem{construction}{Construction}[section]
\newtheorem{example}[construction]{Example}
\theoremstyle{plain}
\newtheorem{theorem}[construction]{Theorem}
\newtheorem{corollary}[construction]{Corollary}
\newtheorem{lemma}[construction]{Lemma}
\theoremstyle{remark}
\newtheorem*{remark*}{Remark}
\newcommand*{\R}[1]{\textcolor{red}{#1}}
\newcommand*{\ceil}[1]{\lceil #1 \rceil}
\title{On min-base palindromic representations of powers of 2}
\author{%
Donald L. Kreher%
\thanks{Department of Mathematical Sciences,
Michigan Technological University,
Houghton, MI 49931-1295, U.S.A.}
\and
Douglas R. Stinson%
\thanks{David R.~Cheriton School of Computer Science, University of Waterloo,  Waterloo ON, N2L 3G1, Canada}
}
\begin{document}
\maketitle
\begin{abstract}
A positive integer  $N$ is \emph{palindromic in the base $b$}  when
$N = \sum_{i=0}^{k} c_i b^i$,
$c_k\neq 0$,and
$c_i=c_{k-i},\; i=0,1,2,...,k$, 
Focusing on powers of 2, we investigate the smallest base $b$
when $N=2^n$ is palindromic in the base $b$.
\end{abstract}
\section{Introduction}
Let $b > 1$ be an integer. Then every positive integer
$N$ can be written uniquely in the form
\[
N = \sum_{i=0}^k c_i b^i,kk
\]
where $k \geq 0$ and $0\leq c_i <b$ are integers with $c_k >0$. We
write 
\[
N = (c_k,c_{k-1},c_{k-2},\cdots ,c_0)_{b}
\]
and say $N$ has representation 
$(c_k,c_{k-1},c_{k-2},\ldots, c_0)$ 
in \emph{radix} or \emph{base} $b$. 
For example,
\[2023 = (2,0,2,3)_{10} = (5,6,2,0)_{7} = (7,14,7)_{16}=(3,18,22)_{23}.\]
The coefficients
$c_k, c_{k-1}, \ldots, c_0$ are called the \emph{digits} of $N$
in base $b$ and $c_k$ is the \emph{leading digit}. 

If $N = (c_k,c_{k-1},c_{k-2},\cdots, c_0)_{b}$ and
$c_j = c_{k-j}$ for all $j=0,1,2,\ldots,k$, then
we say $N$ is a \emph{palindrome in the base $b$} and that 
$(c_k,c_{k-1},c_{k-2},\cdots, c_0)$ is a 
\emph{$(k+1)$-digit palindromic representation} of $N$ in the base $b$.
For example, $2023$ has a 3-digit palindromic representation in the base 16.   
\begin{remark*}
A palindromic representation that contains leading zeros can 
be reduced. If 
\[
N = (%
\underbrace{0,0,\ldots,0}_{\text{$z$ digits} },
c_{k-z},c_{k-z-1},c_{k-z-2},\cdots, c_z,
\underbrace{0,0,\ldots,0}_{\text{$z$ digits} }
)_{b}
\]
then $N = b^{z-1}M$,
where $M = (c_{k-z},c_{k-z-1},c_{k-z-2},\ldots, c_{z})_b$ is also a palindromic representation.
Thus we only consider palindromic representation with a
nonzero leading digit.
\end{remark*}

There are  not many papers that discuss palindromic representations in different bases. Indeed,  we were able to find only three published results, but there may be others.
\begin{enumerate}[label=(\roman{*})]
\item In~\cite{CLB},  the authors prove, for every base $b \geq 5$,
that any positive integer can be written
as a sum of three palindromes in base $b$.

\item In~\cite{G}, 
the author proves that there exist exactly 203 positive integers $N$ such that $N$ is a palindrome in base 10 with $d \geq 2$ digits and $N$ is also a palindrome with $d$ digits in a base $b\neq 10$. 

\item In~\cite{PP},
for fixed base $b$,
the  authors investigate the number of positive integers up to $n$
that are  palindromic in  base $b$. 

\end{enumerate}
The only discussion pertaining to the problem that we investigate is the website~\cite{Brown}. 

It is not difficult to see that $N = (1,1)_{N-1}$. Hence
every positive integer has a palindromic representation.
We define $b=b(N)$ to be the smallest base $b>1$ such that
$N$ has a palindromic representation in the base $b$. 
Table~\ref{b(N)} enumerates $b(N)$ for all $N \leq 100$.
The red entries in Table~\ref{b(N)} are where $b(N)=N-1$.

\begin{table}[tb]
\begin{center}
\caption{$b(N)$, for $N=i+j$, 
where $i=0,10,20,\ldots,90$
and  $j=0,1,2,\ldots,9$.}
\label{b(N)}
\medskip
\begin{tabular}{l|*{10}{r}}
\tikz[baseline=(O),scale=1,x=0.5em,y=0.5em]
{
 \draw (-1,1)--(1,-1);
 \coordinate (O) at (0,0);
 \node[right=7pt,above=-5pt] at (O) {$i$};
 \node[left =7pt,below=-5pt] at (O) {$j$};
}
   &  0&  1&  2&  3&  4&  5&  6&  7&  8&  9\\\hline
  0&   &  2&  3&\R{  2}&\R{  3}&  2&\R{  5}&  2&  3&  2\\
 10&  3&\R{ 10}&  5&  3&  6&  2&  3&  2&  5&\R{ 18}\\
 20&  3&  2& 10&  3&  5&  4&  3&  2&  3&  4\\
 30&  9&  2&  7&  2&  4&  6&  5&  6&  4& 12\\
 40&  3&  5&  4&  6& 10&  2&  4&\R{ 46}&  7&  6\\
 50&  7&  2&  3&\R{ 52}&  8&  4&  3&  5& 28&  4\\
 60&  9&  6&  5&  2&  7&  2& 10&  5&  3& 22\\
 70&  9&  7&  5&  2&  6& 14& 18& 10&  5&\R{ 78}\\
 80&  3&  8&  3&  5& 11&  2&  6& 28&  5&  8\\
 90& 14&  3&  6&  2& 46& 18& 11&  8&  5&  2\\
\end{tabular}
\end{center}
\end{table}

\vbox{
\begin{theorem}[K. Brown~\cite{Brown}]  
If  $b(N) = N-1$,
then $N=3,4,6$ or $N>6$ and is a prime.

\begin{proof}
It is easy to check that $b(N) \neq N-1$ when $N = 1,2,5$. Also, 
$b(N)=N-1$ when $N=3,4,6$. Hence, we suppose $N > 6$.

Suppose  $N=ab$ with $a < b-1$, 
where $b > 2$. Then $N=a(b-1)+a$, 
so $N$ has the palindromic representation  $(a,a)_{b-1}$.

This covers all composites greater than 6 except for squared primes.
For squares, we have \[a^2 = (a-1)^2 + 2(a-1) + 1,\] so every square 
$N=a^2 > 4$ has the palindromic representation $(1,2,1)_{a-1}$.  
\end{proof}

\end{theorem}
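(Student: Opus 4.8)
The plan is to establish the contrapositive. Since $N = (1,1)_{N-1}$ is always a palindrome, one has $b(N) \le N-1$ for every $N$, so it suffices to prove: if $N \notin \{3,4,6\}$ and $N$ is not a prime greater than $6$ --- equivalently, if $N \in \{1,2,5\}$ or $N$ is composite with $N > 6$ --- then $N$ has a palindromic representation in some base $b$ with $2 \le b \le N-2$, whence $b(N) < N-1$. The values $N = 1$ and $N = 2$ are vacuous, since then $N-1 \in \{0,1\}$ is not an admissible base; for $N = 5$ one checks directly that $5 = (1,0,1)_2$.

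Now take $N > 6$ composite. The key observation is that whenever $N = ab$ with $2 \le a \le b-2$, the identity $N = a(b-1) + a$ shows $(a,a)_{b-1}$ is a $2$-digit palindromic representation of $N$: the digit $a$ is legal because $a < b-1$, the leading digit is nonzero, and since $N = ab \ge 2b$ the base satisfies $b - 1 \le N/2 - 1 < N - 2$. So I would reduce to showing that every composite $N > 6$ that is not a perfect square admits a factorization of this form. For this, let $p$ be the least prime factor of $N$ and put $q = N/p$; since $N$ is composite and not a square, $p < \sqrt{N} < q$. Were $p > q - 2$, we would get $q = p+1$, so $N = p(p+1)$ would be a product of consecutive integers and hence even; then $p = 2$ would force $N = 6$, contradicting $N > 6$, while an odd $p$ would make $2$ a prime factor of $N$ smaller than $p$, contradicting minimality of $p$. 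Hence $p \le q-2$, and $N = pq$ is of the required form.

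It remains to handle a perfect square $N = m^2 > 6$, so $m \ge 3$. For $m \ge 4$ the identity $m^2 = (m-1)^2 + 2(m-1) + 1$ exhibits the $3$-digit palindrome $(1,2,1)_{m-1}$, which is legal since the digit $2$ satisfies $2 \le m-2$ and the base satisfies $m-1 \le m^2 - 2 = N-2$. The only remaining value is $N = 9$, where that identity would call for the illegal digit $2$ in base $2$; there one instead notes $9 = (1,0,0,1)_2$, so $b(9) = 2 < 8$. This exhausts all cases.

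I expect the only real obstacle to be the reduction step: a composite number need not factor as $ab$ with $a$ comfortably below $b$ --- prime squares, and a priori the oblong numbers $p(p+1)$, are the dangerous cases --- and it is the small parity argument that kills the oblong possibility. The digit-legality checks at the extreme ($N = 9$ in base $2$) are a minor wrinkle but genuinely have to be made by hand rather than absorbed into the general identities.
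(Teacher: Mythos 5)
Your proof is correct and follows essentially the same route as the paper: the two-digit palindrome $(a,a)_{b-1}$ for non-square composites and the three-digit palindrome $(1,2,1)_{a-1}$ for squares. You are in fact more careful than the paper at two points: you justify that every non-square composite $N>6$ really does factor as $N=ab$ with $2\le a\le b-2$ (ruling out the oblong case $N=p(p+1)$ by the parity argument), and you notice that for $N=9$ the identity $(1,2,1)_{a-1}$ would call for the digit $2$ in base $2$, which is not a legal digit, so that case must be settled separately via $9=(1,0,0,1)_2$ --- a detail the paper's proof silently skips.
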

}
\begin{remark*}
Observe that $b(13)=3$, because $13=(1,1,1)_{3}$, so
not every prime $N$ has $b(N)=N-1$.
\end{remark*}

If 
$N = (\alpha c_k,\alpha c_{k-1},\cdots \alpha c_0)_{b}$,
then 
$N= \alpha(c_k, c_{k-1}, \cdots  c_0)_{b}= \alpha M$, 
where 
$M= (c_k, c_{k-1}, \cdots  c_0)_{b}$ and we say that
the representation of $N$ in the base $b$
is a \emph{multiple} of the representation of $M$ in the base $b$.
Conversely if 
$N = \alpha M$ 
and
$M= (c_k, c_{k-1}, \cdots  c_0)_{b}$, 
where 
$\alpha c_i < b$,
for 
$i=0,1,2,\ldots,k$, then
$N= \alpha(c_k, c_{k-1}, \cdots  c_0)_{b}$. 

For example, we have
\begin{align}
2023 
&= (7,14,7)_{16} 
= 7(1,2,1)_{16}
= 7 \cdot 289.
\end{align}
The representation of $2023$ is a multiple of the representation of $289$.

We say that the representation $(\alpha c_k,\alpha c_{k-1},\cdots \alpha c_0)$
of $N$ in the base $b$ has \emph{binomial form} or 
is a \emph{multiple $\alpha$ of a  binomial representation}
if $c_i= \binom{k}{i}$, $i=0,1,2,\ldots,k$. 
Every  multiple of a  binomial representation in the base $b$
is of course a palindromic representation. 

The following easy results can be deduced from~\cite{Brown}.
\begin{lemma}\label{binom}
If $\alpha \dbinom{k}{\lceil k/2 \rceil} < b$ and $N=\alpha (1+b)^k$,
then $N$ has binomial form
\[
\alpha \Bigg( 
\binom{k}{k},
\binom{k}{k-1},
\binom{k}{k-2},
\ldots,
\binom{k}{0}
\Bigg)_{b}.
\]
\end{lemma}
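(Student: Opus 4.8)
The plan is to invoke the binomial theorem directly. Writing $(1+b)^k = \sum_{i=0}^k \binom{k}{i} b^i$, I would multiply through by $\alpha$ to obtain $N = \alpha(1+b)^k = \sum_{i=0}^k \alpha\binom{k}{i}\, b^i$. This already has the shape of a base-$b$ expansion, and each coefficient $\alpha\binom{k}{i}$ is manifestly a non-negative integer; the only thing left to check is that every such coefficient is a legitimate digit, i.e.\ is strictly less than $b$.

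For the digit bound I would use the unimodality of a row of Pascal's triangle: the ratio $\binom{k}{i+1}/\binom{k}{i} = (k-i)/(i+1)$ is at least $1$ precisely when $i \le (k-1)/2$, so $\binom{k}{0} \le \binom{k}{1} \le \cdots$ increasing up to the middle and then symmetrically decreasing, whence $\max_{0\le i\le k}\binom{k}{i} = \binom{k}{\lceil k/2\rceil} = \binom{k}{\lfloor k/2\rfloor}$. Therefore $\alpha\binom{k}{i} \le \alpha\binom{k}{\lceil k/2\rceil} < b$ for all $i$, by hypothesis. Since also the leading coefficient $\alpha\binom{k}{k} = \alpha \ge 1$ is nonzero, the string $\big(\alpha\binom{k}{k}, \alpha\binom{k}{k-1}, \ldots, \alpha\binom{k}{0}\big)$ is a valid base-$b$ representation of $N$ with nonzero leading digit, and by the uniqueness of base-$b$ representations noted in the introduction it is \emph{the} representation of $N$ in base $b$.

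Finally, the symmetry $\binom{k}{i} = \binom{k}{k-i}$ shows this representation is palindromic, and since its digits are the common multiple $\alpha$ of the binomial coefficients $\binom{k}{i}$, it is a multiple $\alpha$ of a binomial representation, i.e.\ it has binomial form as claimed. There is essentially no obstacle beyond recalling the unimodality of the binomial coefficients; the one point worth stating carefully is exactly that $\lceil k/2\rceil$ (equivalently $\lfloor k/2\rfloor$) indexes a maximum entry of row $k$, which is what makes the single hypothesis $\alpha\binom{k}{\lceil k/2\rceil} < b$ suffice to control all the digits at once.
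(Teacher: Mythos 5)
Your proof is correct, and it is exactly the intended argument: the paper states Lemma~\ref{binom} without proof (deferring to Brown's webpage as an ``easy result''), and the binomial theorem plus the unimodality of row $k$ of Pascal's triangle and the uniqueness of base-$b$ representations is precisely what makes the single hypothesis $\alpha\binom{k}{\lceil k/2\rceil}<b$ suffice. No gaps.
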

\begin{corollary} If $n=xk+r$ with $r \geq 0$ and $2^r \binom{k}{\lceil k/2 \rceil} < b$, then $2^n$ has a 
representation in base $b$ in {binomial form}. 

\begin{proof}
If $n=xk+r$, then, setting $b=2^x-1$ and $\alpha=2^r$, we 
obtain $N=2^n =\alpha(1+b)^k$. Because $2^r \binom{k}{\lceil k/2 \rceil} < b$, 
Lemma~\ref{binom} applies 
and we have a representation of $2^n$ in base 
$2^x-1$ in binomial form.
\end{proof}
\end{corollary}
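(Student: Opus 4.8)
The plan is to reduce the statement directly to Lemma~\ref{binom}. The choice of base is essentially forced: for $2^n$ to have binomial form in base $b$ we need $2^n = \alpha(1+b)^k$ for some multiplier $\alpha$, which requires $1+b$ to be a power of $2$; so I would take $b = 2^x - 1$ and $\alpha = 2^r$. With these choices the hypothesis $n = xk + r$ gives
\[
2^n = 2^{xk + r} = 2^r \left(2^x\right)^k = 2^r \bigl(1 + (2^x - 1)\bigr)^k = \alpha (1 + b)^k,
\]
which is exactly the integer $N$ to which Lemma~\ref{binom} applies.

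Next I would check the numerical side condition of Lemma~\ref{binom}, namely $\alpha \binom{k}{\lceil k/2 \rceil} < b$. After substituting $\alpha = 2^r$ this is precisely the assumed inequality $2^r \binom{k}{\lceil k/2 \rceil} < b$, so nothing further is needed. (The reason a single inequality on the central coefficient suffices is that $\binom{k}{i}$ attains its maximum over $0 \le i \le k$ at $i = \lceil k/2 \rceil$; hence $\alpha\binom{k}{i} < b$ for all $i$, so each $\alpha\binom{k}{i}$ is a genuine base-$b$ digit. Lemma~\ref{binom} already incorporates this, so it suffices to quote it.) Applying the lemma then yields
\[
2^n = \alpha\left(\binom{k}{k}, \binom{k}{k-1}, \ldots, \binom{k}{0}\right)_{b},
\]
a multiple of a binomial representation, i.e.\ a representation of $2^n$ in base $b = 2^x - 1$ in binomial form.

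I do not anticipate any genuine obstacle here: the whole content is the elementary identity $2^n = 2^r\bigl(1 + (2^x - 1)\bigr)^k$, after which Lemma~\ref{binom} does all the work. The only point requiring a moment's thought is the choice $b = 2^x - 1$, which is what makes $1 + b$ a power of two so that $(1+b)^k$ absorbs the bulk of the exponent $n$ and leaves the leftover factor $2^r$ as the multiplier $\alpha$.
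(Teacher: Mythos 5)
Your proof is correct and is essentially identical to the paper's: both set $b = 2^x-1$ and $\alpha = 2^r$, observe $2^n = \alpha(1+b)^k$, and invoke Lemma~\ref{binom} using the assumed inequality $2^r\binom{k}{\lceil k/2\rceil} < b$. Your added remark correctly identifies (as the corollary's statement leaves implicit) that the base $b$ must be taken to be $2^x-1$.
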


Kevin Brown writes in his investigation~\cite{Brown} 
\begin{quotation}
\noindent
``This raises the question of whether the min-base
representation for powers of 2 is always of the binomial form.''
\end{quotation}
In the rest of this paper, we prove some partial results regarding this question and we also report some computational results.

\section{New Results}

\begin{lemma}
\label{bin.lem}
If the representation of $2^n$ in the base $b$ is a multiple $\alpha$
of a binomial representation, then $b = 2^x-1$ for some integer $x \leq n$ and $\alpha$ is a power of $2$.
\end{lemma}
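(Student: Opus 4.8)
The plan is to collapse the hypothesis into a single algebraic identity and then use unique factorization of $2^n$. By definition, the assumption that the base-$b$ representation of $2^n$ is a multiple $\alpha$ of a binomial representation means there are a positive integer $\alpha$ and an integer $k$ with $\alpha\binom{k}{i}<b$ for $0\le i\le k$ and
\[
2^n=\sum_{i=0}^{k}\alpha\binom{k}{i}b^i=\alpha\sum_{i=0}^{k}\binom{k}{i}b^i=\alpha(1+b)^k,
\]
the last step being the binomial theorem (this is exactly the computation underlying Lemma~\ref{binom}). I would first dispose of the degenerate case $k=0$: there the representation is the single digit $(\alpha)_b$, forcing $\alpha=2^n$ and $b>2^n$, a case we may set aside (it is never a minimal base, since $b>2^n>2^n-1$), and assume henceforth $k\ge 1$.

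Next I would invoke the fact that every positive divisor of the prime power $2^n$ is itself a power of $2$. Since $\alpha\mid 2^n$, this immediately gives $\alpha=2^a$ for some integer $a\ge 0$, which is the ``$\alpha$ is a power of $2$'' half of the claim. Moreover $(1+b)^k=2^{\,n-a}$ is a power of $2$, and since $k\ge 1$ its base $1+b$ must also be a power of $2$: a positive integer whose $k$-th power is $2^{\,n-a}$ must equal $2^{(n-a)/k}$, with $k\mid n-a$. Writing $1+b=2^x$ then yields $b=2^x-1$ (and incidentally $x\ge 2$, so that $b\ge 3>1$ is a legitimate base).

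Finally, to establish $x\le n$, note that $kx=n-a$ with $k\ge 1$ and $a\ge 0$, so $x\le kx=n-a\le n$. (If desired one can sharpen this using $2^a=\alpha\le\alpha\binom{k}{0}<b<2^x$, hence $a\le x-1$, but no such refinement is needed.) The whole argument is short; the only point requiring a little care is the bookkeeping around $k=0$ and the observation that a perfect $k$-th power which is a power of $2$ has a power-of-$2$ base, and I do not expect any genuine obstacle beyond making those two points precise.
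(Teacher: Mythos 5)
Your proposal is correct and follows essentially the same route as the paper: rewrite the representation as $2^n=\alpha(1+b)^k$ via the binomial theorem and then use the fact that every divisor of $2^n$ is a power of $2$ to conclude both that $\alpha$ is a power of $2$ and that $b+1=2^x$. Your explicit treatment of the degenerate case $k=0$ (which the paper silently assumes away) is a minor but welcome extra precaution; otherwise the two arguments differ only in the order in which the two conclusions are extracted.
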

\begin{proof}
We have 
\[2^n = \sum_{i=0}^k \alpha \binom{k}{i} b^i\]
for some positive integer $\alpha$. 
However, \[\sum_{i=0}^k \alpha \binom{k}{i} b^i = \alpha(b+1)^k.\]
Hence, $b+1$ is a divisor of $2^n$ and thus $b = 2^x-1$ for some $x \leq n$. 
Then
\[2^n = \alpha \, 2^{kx},\] so $\alpha = 2^{n-kx}$ and therefore $\alpha$ is a power of $2$.
\end{proof}

\begin{lemma}\label{Marco}
If $N=(c_{2k+1},c_{2k},\ldots,c_1,c_0)_{b}$
is a palindromic representation,
then $b+1$ divides $N$.
\end{lemma}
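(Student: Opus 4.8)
The plan is to exploit the fact that a palindrome with an \emph{even} number of digits, say $2k+2$ digits, pairs up digit $c_i$ with digit $c_{2k+1-i}$, and that $b^i + b^{2k+1-i} = b^i(1 + b^{2k+1-2i})$ has $1+b$ as a factor because the exponent $2k+1-2i$ is odd. Concretely, I would write
\[
N = \sum_{i=0}^{2k+1} c_i b^i = \sum_{i=0}^{k} c_i \left( b^i + b^{2k+1-i} \right),
\]
using $c_i = c_{2k+1-i}$ to fold the sum in half. Then I would pull out $b^i$ from the $i$-th term to get $b^i(1 + b^{2k+1-2i})$, and observe that for $0 \le i \le k$ the exponent $2k+1-2i$ is a positive odd integer.

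The key number-theoretic fact is that $b+1$ divides $b^m + 1$ whenever $m$ is odd; equivalently $b \equiv -1 \pmod{b+1}$, so $b^m \equiv (-1)^m \equiv -1 \pmod{b+1}$ when $m$ is odd, hence $b^m + 1 \equiv 0 \pmod{b+1}$. Applying this with $m = 2k+1-2i$ shows that each term $c_i b^i(1 + b^{2k+1-2i})$ is divisible by $b+1$, and therefore so is the whole sum $N$. This is a short argument; there is no real obstacle beyond carefully checking that the index bookkeeping in the folding step is correct (in particular that every exponent appearing, $0$ through $2k+1$, is hit exactly once and that the two halves are genuinely matched by the palindrome condition).

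One small point worth stating explicitly in the write-up: the conclusion holds regardless of whether the leading digit $c_{2k+1}$ is zero, so the lemma applies to any even-length palindromic representation, reduced or not; this is what makes it a useful companion to Lemma~\ref{bin.lem}, since it rules out even-length palindromic representations of $2^n$ in any base $b$ with $b+1$ not a power of $2$. If desired, one can also phrase the proof purely in congruence language: reduce $N = \sum_{i=0}^{2k+1} c_i b^i$ modulo $b+1$ using $b \equiv -1$, obtaining $N \equiv \sum_{i=0}^{2k+1} c_i (-1)^i \pmod{b+1}$, and then note that the palindrome condition pairs index $i$ with index $2k+1-i$, whose signs $(-1)^i$ and $(-1)^{2k+1-i}$ are opposite, so the alternating sum telescopes to $0$.
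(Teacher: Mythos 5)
Your proposal is correct and follows essentially the same route as the paper: fold the sum via $c_i = c_{2k+1-i}$ into $\sum_{i=0}^{k} c_i(b^{2k+1-i}+b^i)$ and observe each paired term is divisible by $b+1$. You merely spell out the divisibility of $b^{2k+1-i}+b^i$ in more detail (via the odd exponent $2k+1-2i$), which the paper leaves implicit.
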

\begin{proof}
Because $c_i=c_{2k+1-i}$, $i=0,1,2,\ldots,k$, we have
\[
N=\sum_{i=0}^{2k+1} c_i b^{i}
=\sum_{i=0}^{k} c_i (b^{2k+1-i}+ b^{i})
=\sum_{i=0}^{k} c_i (b^{2k+1-i}+ b^{i})
\]
Then, because $b+1$ divides
$b^{2k+1-i}+ b^{i}$ for $i=0,1,\ldots,k$, we have
that $b+1$ divides  $N$. 
\end{proof}

\begin{corollary}\label{Leonard-2}
If $N=2^n$ has a palindromic representation in base $b$ 
with an even number of digits,
then $b=2^x-1$ for some $x$.
\end{corollary}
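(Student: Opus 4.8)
The plan is to obtain this statement as an immediate consequence of Lemma~\ref{Marco}. First I would observe that a palindromic representation with an even number of digits is, by the reduction remark (we may assume the leading digit is nonzero), of the form $2^n = (c_{2k+1}, c_{2k}, \ldots, c_1, c_0)_b$ for some integer $k \ge 0$; that is, a $(2k+2)$-digit palindrome. This is exactly the hypothesis of Lemma~\ref{Marco}, so the only real work in this step is checking that the ``even number of digits'' condition lines up with the index range $0, 1, \ldots, 2k+1$ used there.

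Second, I would apply Lemma~\ref{Marco} to conclude that $b+1$ divides $N = 2^n$. The final step is then the elementary observation that every positive divisor of $2^n$ is a power of $2$, so $b + 1 = 2^x$ for some integer $x$; since $b > 1$ forces $b+1 \ge 3$ we get $x \ge 2$, and since $b + 1$ divides $2^n$ we get $x \le n$. In any case $b = 2^x - 1$, which is what was to be shown.

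I do not expect any genuine obstacle here: the entire content sits in Lemma~\ref{Marco}, and the corollary is just its specialization to $N = 2^n$ combined with the fact that the divisors of a power of $2$ are powers of $2$. If anything, the only point requiring a moment's care is the bookkeeping in the first step --- writing a general even digit-count as $2k+2$ so that it matches the $b^{2k+1-i} + b^{i}$ pairing used in the proof of Lemma~\ref{Marco} --- together with the remark that we may indeed assume the leading digit is nonzero, so that the digit count is honestly even.
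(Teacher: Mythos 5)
Your proof is correct and is exactly the paper's argument: apply Lemma~\ref{Marco} to conclude $b+1$ divides $2^n$, then use the fact that every divisor of a power of $2$ is a power of $2$ to get $b = 2^x - 1$. The extra care you take in matching the even digit count to the $2k+2$ indexing of Lemma~\ref{Marco} is fine but not something the paper dwells on.
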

\begin{proof}
Applying Lemma~\ref{Marco}, we see that $b+1$ divides $2^n$.
Therefore $b=2^x-1$, for some $x$.
\end{proof}

We now show that Corollary~\ref{Leonard-2} does not necessarily hold if $k$ is even. We consider palindromic representations of $2^n$ with $k = 2$ (i.e., palindromic representations with three digits),
which necessarily have the form $2^n = (c,d,c)_b$. We present examples of such representations where $b$ is not of the form
$2^x-1$ for some integer $x$.

\begin{theorem}
\label{k=2}
$2^n$ has a 3-digit palindromic representation in the base $b$ if and only if
$b^2 + 1 \leq 2^n \leq b^3-1$ and 
\begin{equation}
\label{3digit.eq} \frac{1}{b}\left\lfloor \frac{2^n}{b} \right\rfloor- \frac{b-1}{b} \leq c \leq \frac{1}{b}\left\lfloor \frac{2^n}{b} \right\rfloor,
\end{equation}
where $c = 2^n \bmod b \neq 0$.
\end{theorem}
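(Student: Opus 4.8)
The plan is to analyze directly what it means for $2^n = (c,d,c)_b$ to be a valid $3$-digit palindromic representation. Writing this out, $2^n = c b^2 + d b + c = c(b^2+1) + db$, where the digit constraints are $1 \le c \le b-1$ (the leading digit must be nonzero, hence $c \neq 0$) and $0 \le d \le b-1$. The first observation is that reducing $2^n$ modulo $b$ gives $2^n \equiv c \pmod b$, since $c(b^2+1) + db \equiv c \pmod b$; thus $c$ is forced to equal $2^n \bmod b$, and the requirement $c \neq 0$ is exactly the statement $2^n \bmod b \neq 0$ in the theorem.

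Next I would pin down $d$. From $2^n = c(b^2+1)+db$ we solve $d = \bigl(2^n - c(b^2+1)\bigr)/b = (2^n - c)/b - cb$. Since $c = 2^n \bmod b$, the quantity $(2^n - c)/b$ is exactly $\lfloor 2^n/b \rfloor$, so $d = \lfloor 2^n/b \rfloor - cb$. Hence a valid representation exists precisely when this integer $d$ satisfies $0 \le d \le b-1$, i.e. $0 \le \lfloor 2^n/b\rfloor - cb \le b-1$. Rearranging the two inequalities: $cb \le \lfloor 2^n/b\rfloor$ gives $c \le \frac{1}{b}\lfloor 2^n/b\rfloor$, and $\lfloor 2^n/b\rfloor - cb \le b-1$ gives $cb \ge \lfloor 2^n/b\rfloor - (b-1)$, i.e. $c \ge \frac{1}{b}\lfloor 2^n/b\rfloor - \frac{b-1}{b}$. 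That is exactly inequality~\eqref{3digit.eq}. So the digit conditions $0 \le d \le b-1$ translate verbatim into the displayed bound on $c$, once one knows $c = 2^n \bmod b$.

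It remains to handle the size condition $b^2 + 1 \le 2^n \le b^3 - 1$ and to check the logical equivalence runs both directions. The inequality $2^n \le b^3 - 1$ is equivalent to saying $2^n$ has at most $3$ digits in base $b$ (the largest $3$-digit number is $(b-1,b-1,b-1)_b = b^3-1$), and $2^n \ge b^2+1$ rules out representations with fewer than $3$ significant digits — note $2^n = b^2$ is impossible since $b^2 \mid 2^n$ would force $b$ to be a power of $2$ and then $2^n/b^2$ a power of $2$, but actually the cleaner route is: a genuine $3$-digit palindrome $(c,d,c)_b$ with $c \ge 1$ has value at least $b^2 + 0 \cdot b + 1 = b^2+1$, giving the lower bound for free, while the value is at most $(b-1)b^2 + (b-1)b + (b-1) = b^3-1$. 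For the converse, given the size bound and \eqref{3digit.eq} with $c = 2^n \bmod b \neq 0$, I reverse the computation: set $d := \lfloor 2^n/b\rfloor - cb$, verify $0 \le d \le b-1$ from \eqref{3digit.eq}, verify $1 \le c \le b-1$ (here $c \ge 1$ is the hypothesis $c \neq 0$, and $c \le b-1$ is automatic since $c = 2^n \bmod b$ and we need $2^n \le b^3-1$ to ensure the leading digit is really $c$ and not something that would make a fourth digit appear — this is where the upper size bound is used), and then check $cb^2 + db + c = 2^n$ by direct substitution.

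I expect the main obstacle to be bookkeeping around the boundary cases and making sure the size condition $b^2+1 \le 2^n \le b^3-1$ is neither too strong nor too weak: specifically, confirming that $c = 2^n \bmod b$ together with \eqref{3digit.eq} does not already imply one of the size bounds (so that it genuinely needs to be stated), and conversely that no spurious "representation" with $c \ge b$ or with a hidden fourth digit sneaks in. Everything else is a short algebraic manipulation: the two substantive ideas are (i) $c$ is determined as $2^n \bmod b$, and (ii) $d = \lfloor 2^n/b\rfloor - cb$, after which \eqref{3digit.eq} is just $0 \le d \le b-1$ rewritten.
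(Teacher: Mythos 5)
Your proposal is correct and follows essentially the same route as the paper: reduce modulo $b$ to force $c = 2^n \bmod b$, solve for $d = \lfloor 2^n/b\rfloor - cb$, and translate $0 \le d \le b-1$ into inequality~\eqref{3digit.eq}. You are in fact somewhat more careful than the paper about the sufficiency direction (which the paper dismisses in one sentence), and your observation that the size bounds $b^2+1 \le 2^n \le b^3-1$ are already implied by the remaining conditions in the converse is accurate.
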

\begin{proof}
Suppose that 
\begin{equation}
\label{k2eq}
2^n = c + db + cb^2
\end{equation}
with $1 \leq c \leq b-1$ and $0 \leq d \leq b-1$.
The smallest palindromic representation with 3 digits is $(1,0,1)_b$ and the largest is $(b-1,b-1,b-1)_b$, 
so we must have \[b^2 + 1 \leq 2^n \leq b^3-1.\]
Reducing equation (\ref{k2eq}) modulo $b$, we have $2^n \equiv c \pmod{b}$. Since $0 \leq c \leq b-1$, we have $c = 2^n \bmod b$.

Next, we can express $2^n = kb + c$, where $k = \lfloor \frac{2^n}{b} \rfloor$.
We have 
\[ c(1 + b^2) + db = 2^n = kb + c,\]
from which it follows that $d = k - cb$.
We require $0 \leq d = k - cb \leq b-1$, so 
\[ c \leq \frac{k}{b} = \frac{1}{b}\left\lfloor \frac{2^n}{b} \right\rfloor \]
and
\[ c \geq \frac{k-b+1}{b} = \frac{1}{b}\left\lfloor \frac{2^n}{b} \right\rfloor- \frac{b-1}{b}. \]
These necessary conditions are also sufficient, so the desired result follows. 
\end{proof}

\begin{theorem}
\label{k=2 BNF}
The only 3-digit binomial form representations of $2^n$ are
\[
(2^i)(1,2,1)_{2^{(n-i)/2}-1},
\]
where $0 \leq  3i < n-2$ and $n \equiv i \pmod{2}$.
\end{theorem}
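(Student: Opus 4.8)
The plan is to combine the two preceding results: Lemma~\ref{bin.lem}, which constrains the base and multiplier of \emph{any} binomial-form representation of $2^n$, and Theorem~\ref{k=2}, which characterizes exactly which three-digit palindromic representations exist. First I would observe that a $3$-digit binomial-form representation has $k=2$, so the digit pattern $(c_2,c_1,c_0)$ must be a multiple $\alpha$ of $\binom{2}{2},\binom{2}{1},\binom{2}{0} = 1,2,1$; that is, it has the shape $(\alpha, 2\alpha, \alpha)_b$ with $2\alpha < b$. By Lemma~\ref{bin.lem}, applied with $k=2$, the base must be $b = 2^x - 1$ for some integer $x \le n$, and $\alpha = 2^{n-2x}$, a power of $2$. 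Writing $\alpha = 2^i$ and $x = (n-i)/2$, we get exactly the claimed form $(2^i)(1,2,1)_{2^{(n-i)/2}-1}$, provided the parameter constraints hold.

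Next I would pin down the constraints. Since $x = (n-i)/2$ must be a positive integer, we need $n \equiv i \pmod 2$ and $n - i > 0$; together with $\alpha = 2^{n-2x} = 2^i$ being a genuine multiplier we need $i \ge 0$. The digit condition $2\alpha < b$ becomes $2^{i+1} < 2^{(n-i)/2} - 1$, i.e. $2^{i+1} + 1 \le 2^{(n-i)/2} - 1$; since both sides are integers and the right side must exceed $2^{i+1}$, this is equivalent to $2^{i+1} < 2^{(n-i)/2}$, which after taking logarithms gives $i + 1 < (n-i)/2$, i.e. $3i < n - 2$. (One should double-check the boundary: whether $2\alpha < b$ or $2\alpha \le b-1$ is the right inequality, and that $2^{i+1} = 2^{(n-i)/2}-1$ is impossible since a power of $2$ is never one less than a power of $2$ for the relevant sizes — indeed $2^a = 2^c - 1$ forces $a = 0, c = 1$.) Conversely, when $0 \le 3i < n-2$ and $n \equiv i \pmod 2$, Lemma~\ref{binom} (or a direct check) confirms that $(2^i)(1,2,1)_{2^{(n-i)/2}-1}$ really is a valid representation of $2^n$, since $2^i (1 + (2^{x}-1))^2 = 2^i \cdot 2^{2x} = 2^{i + 2x} = 2^n$.

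The one point requiring care — the main (minor) obstacle — is making sure no binomial-form three-digit representation is missed by insisting on the multiplier/base shape forced by Lemma~\ref{bin.lem}: in principle a representation could be both of binomial form and fail to be "primitive," but Lemma~\ref{bin.lem} already handles arbitrary positive-integer multipliers $\alpha$, so every case is covered. A secondary nuisance is the precise handling of the inequality $2\alpha \le b - 1$ versus $2\alpha < b$ and translating it cleanly into the strict inequality $3i < n-2$; because all quantities are powers of $2$ minus constants, the floor/ceiling subtleties collapse and the arithmetic is routine. Assembling these pieces gives the stated equivalence.
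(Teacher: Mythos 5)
Your proposal is correct and follows essentially the same route as the paper: the paper's proof simply inlines, for $k=2$, the computation $2^n=\alpha(1+b)^2$ that forces $\alpha$ and $b+1$ to be powers of $2$ (which is exactly the content of Lemma~\ref{bin.lem}), and then translates the digit bound $2\alpha<b$ into $3i+2<n$ and the parity condition. Your extra care with the boundary $2^{i+1}<2^{(n-i)/2}-1$ versus $2^{i+1}<2^{(n-i)/2}$, and your explicit verification of the converse, are fine and slightly more thorough than the paper's write-up, but they do not change the argument.
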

\begin{proof}
If $2^n$ has a 3-digit binomial form representation, then 
it is  
\[
  \alpha(1,2,1)_b,
\]
for some base $b$ and some multiplier $\alpha$.  Hence,
\[
2^n= \alpha (1+2b+b^2)= \alpha (1+b)^2.
\]
Thus $\alpha = 2^i$ and $(1+b)^2=2^j$, where
$i+j =n$ and $2^{i+1} < b=2^{j/2}-1$. 
Thus $i+1 < j/2 =(n-i)/2$.
Hence $3i+2 < n$ and $n\equiv i \pmod{2}$.
\end{proof}
For a given base $b$, is easy to check when the conditions of Theorem \ref{k=2} are satisfied. Theorem~\ref{k=2 BNF} identifies the cases
when they have binomial form.
There are other palindromic representations as well; 
the smallest example is
$2^{12} = (11,6,11)_{19}$. We list all such palindromic representations, for $n \leq 20$, in Table \ref{k=2tab}.

We observe from Table \ref{k=2tab} that there are some palindromic representations having the form $2^{n} = (1,c,1)_{b}$. It is perhaps of interest to consider these representations in more detail.

\begin{theorem}
\label{c=1.thm}
There is a representation $2^{n} = (1,c,1)_{b}$ that is not of binomial form if and only if 
\begin{enumerate}
\item there is a factorization $2^n-1 = kb$ with $b \leq k \leq 2b-1$, and 
\item it is not the case that $n$ is even and $b = 2^{n/2} - 1$.
\end{enumerate}
\end{theorem}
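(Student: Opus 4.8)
The plan is to start from the defining equation $2^n = c b^2 + b + c$ for a representation of the form $(1,c,1)_b$ (here the leading and trailing digits are $1$, and the middle digit is $c$), which rewrites as $2^n = c(b^2+1) + b$. Reducing modulo $b$ gives $2^n \equiv c \pmod b$, and reducing modulo $b+1$ gives $2^n = c(b^2+1) + b \equiv 2c - 1 \pmod{b+1}$. More usefully, I would subtract: $2^n - b = c(b^2+1)$, but the cleaner route is to observe $2^n - 1 = c b^2 + b + c - 1 = b(cb + 1) + (c-1)$, which is awkward; instead, I would use that $(1,c,1)_b$ being palindromic with an even or odd digit count of $3$ — so Lemma~\ref{Marco} does not directly apply — and work with $2^n + 1$ or factor directly. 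The key algebraic identity I expect to use is: $(1,c,1)_b$ with $c \le b-1$ satisfies $2^n = c(b^2+1) + b$. Writing $k = \lfloor 2^n / b\rfloor$ as in Theorem~\ref{k=2}, we get $c = 2^n \bmod b$ and $d = 1$, so $1 = k - cb$, i.e.\ $k = cb + 1$, i.e.\ $2^n = b(cb+1) + c = cb^2 + b + c$. Then $2^n - c = b(cb+1)$, so $b \mid 2^n - c$; combined with $c = 2^n \bmod b < b$ this is automatic. The cleaner divisibility: from $2^n = cb^2 + b + c$ we get $c(b^2 + 1) = 2^n - b$, hence $b \mid 2^n - b \cdot 0$... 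Let me instead note $2^n - 1 = cb^2 + b + c - 1$; this does not obviously factor. The right move is: $4 \cdot 2^n = (2cb + 1)^2 + (4c - 1) \cdot$ something — too messy. I will instead directly manipulate $2^n = cb^2+b+c \iff 4c \cdot 2^n = (2cb+1)^2 + 4c^2 - 1$, i.e.\ $(2cb+1)^2 = 4c\cdot 2^n - 4c^2 + 1 = 4c(2^n - c) + 1$.

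Rather than that, I would prove it via the substitution that makes condition (1) transparent. From $2^n = cb^2 + b + c$ we have $c(b^2+1) = 2^n - b$. Adding $1$ to both sides of $2^n = cb^2 + b + c$: no. The identity I actually want is $2^n - 1 = (b+1)(\text{something})$ when the representation has an \emph{even} number of digits — but here it has $3$ digits, so instead I expect condition (1) to come from rewriting: set $d = 1$ in Theorem~\ref{k=2}, so $d = k - cb = 1$ gives $k = cb+1$, and $2^n = kb + c = (cb+1)b + c = cb^2 + b + c$. Now $2^n - b = c(b^2+1)$, and separately $2^n + 1 = cb^2 + b + c + 1$; hmm. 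The factorization $2^n - 1 = kb$ with $b \le k \le 2b-1$ should come from a \emph{different} grouping. Let me try: $2^n = cb^2 + b + c$. Consider $2^n - 1 = cb^2 + b + c - 1 = cb^2 - c + b + 2c - 1 = c(b-1)(b+1) + (b + 2c - 1)$. For this to equal $kb$ we'd need $b \mid b + 2c - 1$, i.e.\ $b \mid 2c-1$; since $1 \le 2c - 1 \le 2b - 3 < 2b$, this forces $2c - 1 = b$ (when $2c-1 \ge 1$), i.e.\ $b$ odd and $c = (b+1)/2$. That is too restrictive, so I must be grouping wrong — the correct statement in the theorem surely uses $2^n - 1$ in a way tied to $b \mid$ it directly. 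So the \textbf{first real step} is to pin down the exact algebra: I suspect the intended identity is that $(1,c,1)_b$ representation of $2^n$ is equivalent, after noting $\gcd$ considerations, to $2^n \equiv 1 \pmod b$ forced by... no, $2^n \equiv c \pmod b$.

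Given the subtlety, here is the concrete plan. First I would establish the forward direction: assume $2^n = (1,c,1)_b$ with $1 \le c \le b-1$ and the representation is not of binomial form. Binomial form here means $c = 2$ exactly (since $\binom{2}{1} = 2$ and the multiplier $\alpha$ must be $1$ because the leading digit is $\alpha \binom{2}{0} = \alpha = 1$). So ``not binomial form'' means simply $c \ne 2$. By Theorem~\ref{k=2 BNF}, the binomial cases are exactly $(1,2,1)_{2^{n/2}-1}$ with $n$ even — matching clause (2) of the theorem (``it is not the case that $n$ is even and $b = 2^{n/2}-1$''), which is the negation of being that specific binomial representation. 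Next, from $2^n = cb^2 + b + c$, I would derive the factorization in clause (1). Multiplying by $4c$: $(2cb + 1)^2 = 4c \cdot 2^n - (4c^2 - 1) = 2^{n+2}c - 4c^2 + 1$. Alternatively and more promisingly: from $2^n = cb^2 + b + c$, reduce mod $b$: $c \equiv 2^n$; reduce mod $b^2$: $2^n \equiv b + c \pmod{b^2}$. I would then argue $2^n - 1 \equiv b + c - 1 \pmod{b^2}$ and... The cleanest: since $d=1$ in the three-digit equation and Theorem~\ref{k=2} gives $1 = d = k - cb$ with $k = \lfloor 2^n/b\rfloor$, we get $k = cb + 1$ and $2^n = kb + c$, hence $2^n - c = kb$ with $k = cb+1$. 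Now $c = 2^n - kb = 2^n \bmod b$, and the constraint $0 \le d \le b-1$ with $d = 1$ combined with $1 \le c \le b-1$ is what we need; but to get the ``$2^n - 1 = kb$'' form I would note that when $c = 1$ the equation $2^n = b^2 + b + 1$... no wait, $c$ is the middle digit, not necessarily $1$. Re-reading: the representation is $(1,c,1)_b$, so leading/trailing digit $1$, middle digit $c$. So $2^n = 1\cdot b^2 + c \cdot b + 1 = b^2 + cb + 1$, hence $2^n - 1 = b(b + c) = b \cdot k$ where $k = b + c$. Since $1 \le c \le b - 1$, we get $b + 1 \le k \le 2b - 1$, i.e.\ $b \le k \le 2b - 1$ (as $k = b+c \ge b+1 > b$). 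That is \emph{exactly} condition (1). So the whole proof is: $2^n = (1,c,1)_b \iff 2^n - 1 = kb$ with $k = b + c$, $1 \le c \le b-1$ $\iff$ $2^n - 1 = kb$ with $b \le k \le 2b - 1$ (setting $c = k - b$, need $c \neq 0$ so $k \neq b$, giving $b < k \le 2b-1$; and $c \le b-1$ gives $k \le 2b-1$). Wait — need $c \ge 1$ strictly for the middle digit, and actually $c$ could be $0$? If $c = 0$ the representation is $(1,0,1)_b = b^2 + 1$, still a valid $3$-digit palindrome, but then $k = b$. I'd handle whether $2^n = b^2+1$ is possible (it would need $b^2 = 2^n - 1$, impossible for $n \ge 1$ by Catalan/Mihailescu or elementary mod-$4$ argument since $2^n - 1 \equiv 3 \pmod 4$ for $n \ge 2$ is not a square, and small cases by hand), so effectively $c \ge 1$ and $k > b$.

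So the \textbf{main steps} are: (i) rewrite $2^n = (1,c,1)_b$ as $2^n - 1 = b(b+c)$; (ii) set $k = b + c$ and translate the digit constraints $1 \le c \le b-1$ into $b \le k \le 2b - 1$ (dispatching the degenerate $c = 0$ / $k = b$ case by showing $2^n - 1$ is never a perfect square for $n \ge 1$, hence $2^n \ne b^2 + 1$); (iii) identify ``binomial form'' for a $3$-digit representation with leading digit $1$ as forcing $\alpha = 1$ and $c = \binom{2}{1} = 2$, so via Theorem~\ref{k=2 BNF} the binomial representations of $2^n$ of this shape are precisely $(1,2,1)_{2^{n/2}-1}$ with $n$ even; (iv) conclude that ``not of binomial form'' is equivalent to excluding exactly the pair $(n \text{ even},\, b = 2^{n/2}-1)$, which is clause (2). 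The \textbf{main obstacle} — really the only non-bookkeeping point — is step (ii)'s degenerate case: ruling out $2^n = b^2 + 1$. I expect to handle this by a short congruence argument: for $n \ge 2$, $2^n \equiv 0 \pmod 4$, so $b^2 \equiv -1 \equiv 3 \pmod 4$, impossible since squares are $0$ or $1$ mod $4$; and $n = 0, 1$ give $2^n \in \{1, 2\}$, not of the form $b^2 + 1$ with $b > 1$. Everything else is direct substitution, so the proof should be short.
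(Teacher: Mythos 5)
After several false starts, your proposal settles on the correct argument, and it is essentially the paper's: you read off $2^n = b^2+cb+1$, factor $2^n-1 = b(b+c)$, translate the digit constraint $0 \le c \le b-1$ into $b \le k \le 2b-1$ for $k=b+c$, and identify the binomial case as exactly $(1,2,1)_{2^{n/2}-1}$ with $n$ even via Theorem~\ref{k=2 BNF} with $i=0$ --- the same computation the paper performs by substituting the outer digit $1$ into the inequality of Theorem~\ref{k=2}. Your extra mod-$4$ argument ruling out $2^n=b^2+1$ is harmless but unnecessary, since $k=b$ corresponds to the legitimate middle digit $c=0$ and is already permitted by condition (1).
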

\begin{proof}
Suppose there is a representation $2^{n} = (1,c,1)_{b}$.
From Theorem \ref{k=2} and its proof, we have $2^n \equiv 1 \bmod b$ and $2^n-1 = kb$. Therefore, $\left\lfloor \frac{2^n}{b} \right\rfloor = k$.
Then the inequality (\ref{3digit.eq}) is
\[ \frac{k}{b}  - \frac{b-1}{b} \leq 1 \leq \frac{k}{b},\]
which simplifies to 
\[ k-b+1 \leq b \leq k,\]
or, equivalently,
\[b \leq k \leq 2b-1,\]
where $c = 2^n \bmod b \neq 0$.
Hence there is a factorization $2^n-1 = kb$ with $b \leq k \leq 2b-1$.

We need to consider the possibility that this representation has binomial form. From Theorem \ref{k=2 BNF} with $i=0$, we see that
$b = 2^{n/2}-1$, so $n$ is even and $2^n = (1,2,1)_b$.
\end{proof}

\begin{example} Suppose $n = 15$. We have the prime factorization
$2^{15-1} = 7 \times 31 \times 151$. If we take $b = 151$ and $k = 7 \times 31 = 217$, then the conditions of Theorem \ref{c=1.thm} are satisfied. We obtain
the palindromic representation $2^{15} = (1,66,1)_{151}$. 
\end{example}


\begin{table}
\begin{center}
\caption{Non-binomial palindromic representations with 
three digits for $n \leq 20$\\
$2^n = (c,d,c)_b$}
\label{k=2tab}
\[
\begin{array}{rrrr}
\multicolumn{1}{c}{n}&
\multicolumn{1}{c}{b}&
\multicolumn{1}{c}{c}&
\multicolumn{1}{c}{d}\\
\hline
12 & 19 & 11 & 6\\
13 & 27  &  11 &  6\\
14  & 27 & 22 & 12\\
14 & 60 & 4 & 33\\
15 & 37 & 23 & 34\\
15 & 151 &  1 & 66\\
16 & 151 & 2 & 132\\
17 & 142 & 6 & 71\\
18 & 399 & 1 & 258\\
19 & 269 & 7 & 66\\
19 & 438 & 2 & 321\\
20 & 269 & 14 & 132\\
20 & 775 & 1 & 578\\
20 & 825 & 1 & 446
\end{array}
\]
\end{center}
\end{table}

We now  obtain some numerical conditions that guarantee that a palindromic representation 
has binomial form. 

\vbox{
\begin{lemma}
\label{L2.3}
Suppose $b =2^x-1$ for some $x$
and let $2^n =(c_k,c_{k-1},\ldots,c_0)_b$ be a palindromic representation.
Then the following hold.
\begin{enumerate}[label={\textup{(\arabic *)}}]
\item $n > k(x-1)$.
\item Suppose $r=n-kx \geq 0$. If $k \leq x-r$, or if $3 \leq k \leq x-r+1$ and $x \geq 3$, 
then the palindromic representation of $2^n$ in the base 
$b$ is 
the binomial form given in 
Lemma~\ref{binom}, with $\alpha=2^r$.
\end{enumerate}
\end{lemma}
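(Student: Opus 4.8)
The plan is to prove the two parts separately: part~(1) is a one-line size estimate, and part~(2) reduces, via Lemma~\ref{binom} and the uniqueness of base-$b$ expansions, to a single inequality bounding the central binomial coefficient $\binom{k}{\lceil k/2 \rceil}$ against $b = 2^x - 1$, which is then checked under each of the two stated hypotheses. For part~(1): since $(c_k,\ldots,c_0)_b$ is a $(k+1)$-digit representation with $c_k \neq 0$, we have $2^n \geq b^k = (2^x-1)^k$; moreover $b \geq 2$ forces $x \geq 2$, so $2^x-1 \geq 2^{x-1}$ and hence $2^n \geq 2^{k(x-1)}$. When $k \geq 1$ the number $(2^x-1)^k$ is odd while $2^n$ is even, so this inequality is strict and $n > k(x-1)$; when $k = 0$ the claim is merely $n > 0$.

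For part~(2), the key observation is that $2^r(1+b)^k = 2^r(2^x)^k = 2^{r+kx} = 2^n$, so the binomial expansion $\sum_{i=0}^k 2^r \binom{k}{i} b^i$ equals $2^n$ and is a candidate base-$b$ representation. Lemma~\ref{binom}, applied with $\alpha = 2^r$, states that this candidate is in fact the base-$b$ representation of $2^n$ --- and therefore, by uniqueness of base-$b$ expansions, coincides with the given palindromic representation $(c_k,\ldots,c_0)_b$ --- provided $2^r \binom{k}{\lceil k/2 \rceil} < b = 2^x - 1$. Thus the entire statement reduces to verifying this one inequality.

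To verify it I would use two elementary bounds on the central binomial coefficient: $\binom{k}{\lceil k/2 \rceil} \leq 2^{k-1}$ for all $k \geq 1$, and the sharper $\binom{k}{\lceil k/2 \rceil} \leq 3 \cdot 2^{k-3}$ for all $k \geq 3$ (obtained by a short induction on $k$ in steps of two). If $k \leq x - r$, then $r + k - 1 \leq x - 1$, so $2^r \binom{k}{\lceil k/2 \rceil} \leq 2^{r+k-1} \leq 2^{x-1} < 2^x - 1$ since $x \geq 2$; the sub-case $k = 0$ is immediate because then $2^r \binom{0}{0} = 2^n < b$ already. If instead $3 \leq k \leq x - r + 1$ and $x \geq 3$, then $r + k - 3 \leq x - 2$, so $2^r \binom{k}{\lceil k/2 \rceil} \leq 3 \cdot 2^{r+k-3} \leq 3 \cdot 2^{x-2} < 2^x - 1$, the last inequality because $2^x - 1 - 3 \cdot 2^{x-2} = 2^{x-2} - 1 > 0$ when $x \geq 3$. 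In either case the required inequality holds, completing the proof.

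The step I expect to be the real obstacle --- and the source of the somewhat awkward hypotheses $3 \leq k$ and $x \geq 3$ in the second case --- is obtaining a bound on $\binom{k}{\lceil k/2 \rceil}$ strong enough to beat $2^x - 1$, not merely $2^x$, when $k = x - r + 1$: the crude estimate $2^{k-1}$ only gives $2^r \binom{k}{\lceil k/2 \rceil} \leq 2^x$, which is not strictly below $2^x - 1$, so one genuinely needs the constant-factor improvement to $3 \cdot 2^{k-3}$, and that is both valid and strong enough (against the trailing $-1$) only once $k \geq 3$ and $x \geq 3$. I would also remark that the palindromic hypothesis is used only to know that $2^n$ has exactly $k+1$ base-$b$ digits, so that $r = n - kx$ is the exponent matching the $(k+1)$-term binomial expansion; everything else in part~(2) rests on uniqueness of base-$b$ expansions rather than on palindromicity.
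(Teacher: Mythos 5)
Your proof is correct, and its overall architecture matches the paper's: both reduce part~(2) to the single inequality $2^r\binom{k}{\lceil k/2\rceil} < 2^x-1$ so that Lemma~\ref{binom} (plus uniqueness of base-$b$ representations) applies. The difference is in how that inequality is established. The paper invokes the analytic estimate $\binom{2m}{m} < 4^m/\sqrt{\pi m}$ cited from Luke, which yields $\binom{k}{\lceil k/2\rceil} < 2^k/\sqrt{\pi}$ in the first case and $< 2^k/\sqrt{2\pi}$ in the second; you instead use the elementary bounds $\binom{k}{\lceil k/2\rceil} \leq 2^{k-1}$ for $k\geq 1$ and $\binom{k}{\lceil k/2\rceil}\leq 3\cdot 2^{k-3}$ for $k\geq 3$, the latter by induction in steps of two (the ratio $\binom{k+2}{\lceil(k+2)/2\rceil}/\binom{k}{\lceil k/2\rceil}$ is $4(k+1)/(k+2)$ or $4(k+2)/(k+3)$, both below $4$, so the bound propagates). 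Your constants are just as adequate: $2^{x-1}<2^x-1$ for $x\geq 2$ and $3\cdot 2^{x-2}<2^x-1$ for $x\geq 3$ close both cases. What your route buys is self-containedness --- no external asymptotic expansion is needed, and the base cases $k=3,4$ are checked directly rather than being absorbed into "the same bound holds for odd $k$" as in the paper. Your diagnosis of why the second hypothesis needs the sharper constant (beating the trailing $-1$ when $k=x-r+1$) is exactly the right point. Two minor remarks: in part~(1) your parity argument for strictness is unnecessary, since $2^x-1>2^{x-1}$ already gives $2^n\geq(2^x-1)^k>2^{k(x-1)}$ for $k\geq 1$; and the degenerate case $k=0$, $n=0$ falsifies the literal statement of (1), but this is a defect of the lemma as stated that the paper's proof also ignores.
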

}
\begin{proof}%
We are assuming that  $b=2^x-1$ for some $x$ 
and  $2^n =(c_k,c_{k-1},\ldots,c_0)_b$ is a palindromic representation.
Hence
\[
2^n = \sum_{i=0}^k c_i b^i
\]
and $c_i= c_{k-i}$, for each $i$. In particular, because $c_0 = c_k \geq 1$, we have 
$2^n \geq b^k + 1 > (2^x-1)^k$. However, $2^x-1 > 2^{x-1}$ because $x \geq 2$. Hence $2^n > (2^{x-1})^k$ and therefore $n > (x-1)k$, which proves (1). 


Now assume that $r=n-kx \geq 0$ and $k \leq x-r$. 
We have 
$
2^n = 2^r(b+1)^k.
$
It is well known (see~\cite{Luke}) that
\[
\binom{2m}{m} =
\frac{4^m}{\sqrt{\pi m}}\left(
1-\frac{1}{8m}+\frac{1}{128m^2}+\frac{5}{1024m^3}+
O(m^{-4})
\right) < \frac{4^m}{\sqrt{\pi m}}
\]
and
\[
\binom{2m-1}{m-1} 
=\frac{1}{2}\binom{2m}{m} < 
\frac{4^{m-\frac{1}{2}}}{\sqrt{\pi m}}.
\]

Thus, when $k\geq 2$ is even:
\[ \binom{k}{\ceil{k/2}} \: = {\dbinom{k}{k/2}}
\: < \: \frac{2^k}{\sqrt{{k\pi}/{2}}}
\: \leq \: \frac{2^k}{\sqrt{\pi}}
\: < \: \frac{2^k}{1.75}
\: \leq \: \frac{2^{x-r}}{1.75},
\]
since $\sqrt{\pi} > 1.75$ and $k \leq x-r$.
Similarly, when $k\geq 1$ is odd:
\[\binom{k}{\ceil{k/2}} \: = \: {\dbinom{k}{(k+1)/2}} \: = \: {\dbinom{k}{(k-1)/2}}
\:  < \:
  \dfrac{ 2^k }{ \sqrt{{(k+1)\pi}/{2}} }
\: \leq \: \frac{2^k}{\sqrt{\pi}}
\: < \: \frac{2^k}{1.75}
\: \leq \: \frac{2^{x-r}}{1.75}.\]
Consequently
\[
2^r \binom{k}{\ceil{k/2}} < \frac{2^{x}}{1.75} < 2^x - 1 = b
,\]
since $x \geq 2$.
Hence, Lemma~\ref{binom} applies with $\alpha=2^r$.

\medskip

The proof is similar when $3 \leq k \leq x-r+1$.
When $k\geq 4$ is even:
\[ \binom{k}{\ceil{k/2}} \: 
\: < \: \frac{2^k}{\sqrt{{k\pi}/{2}}}
\: \leq \: \frac{2^k}{\sqrt{2\pi}}
\: < \: \frac{2^k}{2.5}
\: \leq \: \frac{2^{x-r+1}}{2.5} \: = \: \frac{2^{x-r}}{1.25},
\]
since $\sqrt{2\pi} > 2.5$ and $k \leq x-r+1$.
The same upper bound holds when $k\geq 5$ is odd.

Consequently
\[
2^r \binom{k}{\ceil{k/2}} < \frac{2^{x}}{1.25} < 2^x - 1 = b
,\]
since $x \geq 3$.
Hence, Lemma~\ref{binom} applies with $\alpha=2^r$.
\end{proof}

We can also use the proof technique of Lemma \ref{L2.3} to show the following.

\begin{theorem} For all integers $n \geq 2$, $2^n$ has a base-$b$ binomial form representation for some $b \leq 2^y -1$, where $y \leq \left\lceil \sqrt{2n} \right\rceil + 1$.
\end{theorem}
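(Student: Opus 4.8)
The plan is to reduce everything to the Corollary following Lemma~\ref{binom}. With the base taken to be $b=2^x-1$ and the multiplier $\alpha=2^r$, that corollary says: if $n=xk+r$ with $r\ge 0$ and $2^r\binom{k}{\ceil{k/2}}<2^x-1$, then $2^n$ has a binomial form representation in base $2^x-1$; taking $y=x$, the whole statement follows once such $x,k,r$ are produced with $x\le\ceil{\sqrt{2n}}+1$. As in Lemma~\ref{L2.3}, I would not use the Stirling-type estimates at full strength: the crude bound $\binom{k}{\ceil{k/2}}\le 2^{k-1}$, valid for every $k\ge 1$, already gives $2^r\binom{k}{\ceil{k/2}}\le 2^{r+k-1}\le 2^{x-1}<2^x-1$ whenever $k+r\le x$ and $x\ge 2$. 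So it suffices to exhibit, for each $n\ge 2$, integers $k\ge 1$ and $x$ with
\[
2\le x\le\ceil{\sqrt{2n}}+1,\qquad n=xk+r,\qquad 0\le r\le x-k.
\]

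The construction I propose is: let $k$ be the largest integer with $2k^2+k\le n$, and put $x=\ceil{(n+k)/(k+1)}$ and $r=n-kx$. For $n\ge 3$ this gives $k\ge 1$; the lone case $n=2$ is immediate, since $2^2=(1,1)_3$ is already in binomial form with $b=3=2^2-1$ and $2\le\ceil{\sqrt 4}+1$. I would then check three things, each a short computation. (i) Since $2k^2+k\le n$ we have $n-k^2\ge k(k+1)$, so the real interval $[(n+k)/(k+1),\,n/k]$ has length at least $1$ and therefore contains $x=\ceil{(n+k)/(k+1)}$; hence $kx\le n$ and $r\ge 0$. (ii) The inequality $x\ge(n+k)/(k+1)$ rearranges to $n\le(k+1)x-k$, i.e.\ $r\le x-k$, and also $x\ge(n+k)/(k+1)\ge 2k\ge 2$. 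Thus the hypotheses above are met and $2^n$ has a binomial form representation in base $2^x-1$.

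The remaining, and I expect the hardest, step is the upper bound $x\le\ceil{\sqrt{2n}}+1$. Maximality of $k$ gives $n<2(k+1)^2+(k+1)$, so (as $n$ is an integer) $n+k\le 2k^2+6k+2$ and hence $x\le 2k+4$; separately, the strict inequality $x-1<(n+k)/(k+1)$ rearranges to $r\ge x-2k$. Substituting $r\ge x-2k$ into $2n=2kx+2r$ and invoking the identity
\[
2kx+2x-4k\;=\;(x-2)^2+(x-2)(2k+4-x)+4,
\]
the two bounds $2\le x\le 2k+4$ make both factors $x-2$ and $2k+4-x$ nonnegative, so $2n\ge(x-2)^2+4>(x-2)^2$. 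Since $x-2\ge 0$, this yields $x-2<\sqrt{2n}\le\ceil{\sqrt{2n}}$, and as $x-2$ and $\ceil{\sqrt{2n}}$ are integers, $x\le\ceil{\sqrt{2n}}+1$, as required.

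The crux is thus the coordination in the last step: pinning $x$ above by roughly $2k$ and $r$ below by $x-2k$ so that the two estimates slot exactly into the displayed identity. This is what pins down the additive constant; the family $n=2k^2+5k+2$, for which the construction returns $x=2k+4=\ceil{\sqrt{2n}}+1$, shows that $\ceil{\sqrt{2n}}+1$ cannot be replaced by $\ceil{\sqrt{2n}}$.
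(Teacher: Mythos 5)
Your argument is correct, but it reaches the goal by a different construction than the paper. The paper works ``$x$ first'': it sets $x=\lceil\sqrt{2n}\rceil$ and $k=\lfloor n/x\rfloor$, and if the remainder condition $r\le x-k$ fails it replaces $x$ by $x+1$ (showing $r'=r-k\ge 0$ via $x\ge 2k$, which follows from $x\ge\sqrt{2n}\ge\sqrt{2kx}$); it then invokes Lemma~\ref{L2.3}, whose proof rests on the Stirling-type estimates for the central binomial coefficient. You instead work ``$k$ first'': you take $k$ maximal with $2k^2+k\le n$, set $x=\lceil(n+k)/(k+1)\rceil$, and verify $0\le r\le x-k$ directly, feeding the elementary bound $\binom{k}{\lceil k/2\rceil}\le 2^{k-1}$ (rather than the asymptotic expansion of $\binom{2m}{m}$) into the Corollary of Lemma~\ref{binom}. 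What the paper's route buys is brevity: the bound $y\le\lceil\sqrt{2n}\rceil+1$ is immediate because $x$ is defined to be $\lceil\sqrt{2n}\rceil$ and is incremented at most once. What your route buys is self-containedness and a sharper picture of when the hypothesis $r\le x-k$ can be met, at the cost of the more delicate algebraic verification of $x\le\lceil\sqrt{2n}\rceil+1$ via the identity $2kx+2x-4k=(x-2)^2+(x-2)(2k+4-x)+4$; I checked that identity and the surrounding inequalities, and they hold. One small inaccuracy in your closing remark: for $k=1$ the family $n=2k^2+5k+2$ gives $n=9$ and your construction returns $x=5$, not $2k+4=6$, so the claimed tightness only kicks in for $k\ge 2$ (e.g.\ $n=20$); this does not affect the proof of the theorem itself.
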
 

\begin{proof}
Define $x = \lceil \sqrt{2n} \rceil$ and  $k = \left\lfloor \frac{n}{x} \right\rfloor$. 
Then $n = xk + r$, where $0 \leq r \leq x-1$. If $r \leq x - k$, then we have a binomial form representation of $2^n$ to the base $b$, where $b \leq 2^x -1$, as shown in Lemma \ref{L2.3}.

Thus we can suppose that $r \geq x - k + 1$. We have $n = (x+1)k + r-k$, so $n = yk + r'$, where $y = x+1$ and $r' = r-k$. 
Since $r \leq x -1$, we have $r' \leq x - k - 1 = y - k - 2 < y-k$. So we will obtain a binomial form representation of $2^n$ to the base $b$, where $b \leq 2^y -1$, provided that $r' \geq 0$, 
i.e., if $r \geq k$. We have $r \geq x - k + 1$, so it is sufficient to show that $x - k + 1 \geq k$, i.e., $x \geq 2k-1$. 

We have $x \geq  \sqrt{2n}$. Since $k \leq n/x$, we have $kx \leq n$. Hence $x \geq  \sqrt{2n} \geq  \sqrt{2kx}$.
Squaring, we obtain $x^2 \geq 2kx$ and hence $x \geq 2k > 2k-1$, as desired. 
\end{proof}

In Table~\ref{powers of 2}, the  minimum-base palindromic representation of $2^n$ 
is computed for all $n \leq 64$. In every case, the minimum base is of the form $2^x-1$ for some integer $x$. 
The hypotheses of Lemma~\ref{L2.3} are satisfied 
satisfied for each such $n$ except $n=63$. Thus, for all $n \neq 63$, the minimum-base palindromic representation of $2^n$ is guaranteed to be of binomial form. However, even for $n=63$, the hypotheses of Lemma~\ref{binom} are
satisfied.

\begin{table}[tb]
\begin{center}
\caption
{Palindromic representations of $2^n$, where $n=kx+r$\\
Lemma~\ref{L2.3} applies to each entry except for $n=63$\\
}\label{powers of 2}
\vspace*{-\baselineskip}
{\small
\begin{tabular}{@{}*{4}{r@{\hspace{2pt}}}@{\hspace{4pt}}l@{\hspace{1pt}}c@{\hspace{1pt}}l@{\hspace{1pt}}l@{\hspace{1pt}}c@{\hspace{1pt}}l@{}}
\hline
$n$&$k$&$x$&$r$&$b(2^n)$&$=$&$2^x{-}1$&\\\hline
 1& 0& 2& 1&$b(2^{1})$&$=$&$3$ & $2^{1}$&$=$&$2\cdot(1)_{3}$\\
 2& 1& 2& 0&$b(2^{2})$&$=$&$3$ & $2^{2}$&$=$&$(1,1)_{3}$\\
 3& 1& 2& 1&$b(2^{3})$&$=$&$3$ & $2^{3}$&$=$&$2\cdot(1,1)_{3}$\\
 4& 2& 2& 0&$b(2^{4})$&$=$&$3$ & $2^{4}$&$=$&$(1,2,1)_{3}$\\
 5& 1& 3& 2&$b(2^{5})$&$=$&$7$ & $2^{5}$&$=$&$4\cdot(1,1)_{7}$\\
 6& 2& 3& 0&$b(2^{6})$&$=$&$7$ & $2^{6}$&$=$&$(1,2,1)_{7}$\\
 7& 2& 3& 1&$b(2^{7})$&$=$&$7$ & $2^{7}$&$=$&$2\cdot(1,2,1)_{7}$\\
 8& 2& 4& 0&$b(2^{8})$&$=$&$15$ & $2^{8}$&$=$&$(1,2,1)_{15}$\\
 9& 3& 3& 0&$b(2^{9})$&$=$&$7$ & $2^{9}$&$=$&$(1,3,3,1)_{7}$\\
10& 3& 3& 1&$b(2^{10})$&$=$&$7$ & $2^{10}$&$=$&$2\cdot(1,3,3,1)_{7}$\\
11& 2& 5& 1&$b(2^{11})$&$=$&$31$ & $2^{11}$&$=$&$2\cdot(1,2,1)_{31}$\\
12& 4& 3& 0&$b(2^{12})$&$=$&$7$ & $2^{12}$&$=$&$(1,4,6,4,1)_{7}$\\
13& 3& 4& 1&$b(2^{13})$&$=$&$15$ & $2^{13}$&$=$&$2\cdot(1,3,3,1)_{15}$\\
14& 3& 4& 2&$b(2^{14})$&$=$&$15$ & $2^{14}$&$=$&$4\cdot(1,3,3,1)_{15}$\\
15& 3& 5& 0&$b(2^{15})$&$=$&$31$ & $2^{15}$&$=$&$(1,3,3,1)_{31}$\\
16& 4& 4& 0&$b(2^{16})$&$=$&$15$ & $2^{16}$&$=$&$(1,4,6,4,1)_{15}$\\
17& 4& 4& 1&$b(2^{17})$&$=$&$15$ & $2^{17}$&$=$&$2\cdot(1,4,6,4,1)_{15}$\\
18& 3& 5& 3&$b(2^{18})$&$=$&$31$ & $2^{18}$&$=$&$8\cdot(1,3,3,1)_{31}$\\
19& 3& 6& 1&$b(2^{19})$&$=$&$63$ & $2^{19}$&$=$&$2\cdot(1,3,3,1)_{63}$\\
20& 5& 4& 0&$b(2^{20})$&$=$&$15$ & $2^{20}$&$=$&$(1,5,10,10,5,1)_{15}$\\
21& 4& 5& 1&$b(2^{21})$&$=$&$31$ & $2^{21}$&$=$&$2\cdot(1,4,6,4,1)_{31}$\\
22& 4& 5& 2&$b(2^{22})$&$=$&$31$ & $2^{22}$&$=$&$4\cdot(1,4,6,4,1)_{31}$\\
23& 3& 7& 2&$b(2^{23})$&$=$&$127$ & $2^{23}$&$=$&$4\cdot(1,3,3,1)_{127}$\\
24& 4& 6& 0&$b(2^{24})$&$=$&$63$ & $2^{24}$&$=$&$(1,4,6,4,1)_{63}$\\
25& 5& 5& 0&$b(2^{25})$&$=$&$31$ & $2^{25}$&$=$&$(1,5,10,10,5,1)_{31}$\\
26& 5& 5& 1&$b(2^{26})$&$=$&$31$ & $2^{26}$&$=$&$2\cdot(1,5,10,10,5,1)_{31}$\\
27& 4& 6& 3&$b(2^{27})$&$=$&$63$ & $2^{27}$&$=$&$8\cdot(1,4,6,4,1)_{63}$\\
28& 4& 7& 0&$b(2^{28})$&$=$&$127$ & $2^{28}$&$=$&$(1,4,6,4,1)_{127}$\\
29& 4& 7& 1&$b(2^{29})$&$=$&$127$ & $2^{29}$&$=$&$2\cdot(1,4,6,4,1)_{127}$\\
30& 6& 5& 0&$b(2^{30})$&$=$&$31$ & $2^{30}$&$=$&$(1,6,15,20,15,6,1)_{31}$\\
31& 5& 6& 1&$b(2^{31})$&$=$&$63$ & $2^{31}$&$=$&$2\cdot(1,5,10,10,5,1)_{63}$\\
32& 5& 6& 2&$b(2^{32})$&$=$&$63$ & $2^{32}$&$=$&$4\cdot(1,5,10,10,5,1)_{63}$\\
\hline
\end{tabular}
}
{\small
\begin{tabular}{@{}*{4}{r@{\hspace{2pt}}}@{\hspace{4pt}}l@{\hspace{1pt}}c@{\hspace{1pt}}l@{\hspace{1pt}}l@{\hspace{1pt}}c@{\hspace{1pt}}l@{}}
\hline
$n$&$k$&$x$&$r$&$b(2^n)$&$=$&$2^x{-}1$&\\\hline
33& 4& 8& 1&$b(2^{33})$&$=$&$255$ & $2^{33}$&$=$&$2\cdot(1,4,6,4,1)_{255}$\\
34& 4& 8& 2&$b(2^{34})$&$=$&$255$ & $2^{34}$&$=$&$4\cdot(1,4,6,4,1)_{255}$\\
35& 5& 7& 0&$b(2^{35})$&$=$&$127$ & $2^{35}$&$=$&$(1,5,10,10,5,1)_{127}$\\
36& 6& 6& 0&$b(2^{36})$&$=$&$63$ & $2^{36}$&$=$&$(1,6,15,20,15,6,1)_{63}$\\
37& 6& 6& 1&$b(2^{37})$&$=$&$63$ & $2^{37}$&$=$&$2\cdot(1,6,15,20,15,6,1)_{63}$\\
38& 5& 7& 3&$b(2^{38})$&$=$&$127$ & $2^{38}$&$=$&$8\cdot(1,5,10,10,5,1)_{127}$\\
39& 4& 9& 3&$b(2^{39})$&$=$&$511$ & $2^{39}$&$=$&$8\cdot(1,4,6,4,1)_{511}$\\
40& 5& 8& 0&$b(2^{40})$&$=$&$255$ & $2^{40}$&$=$&$(1,5,10,10,5,1)_{255}$\\
41& 5& 8& 1&$b(2^{41})$&$=$&$255$ & $2^{41}$&$=$&$2\cdot(1,5,10,10,5,1)_{255}$\\
42& 7& 6& 0&$b(2^{42})$&$=$&$63$ & $2^{42}$&$=$&$(1,7,21,35,35,21,7,1)_{63}$\\
43& 6& 7& 1&$b(2^{43})$&$=$&$127$ & $2^{43}$&$=$&$2\cdot(1,6,15,20,15,6,1)_{127}$\\
44& 6& 7& 2&$b(2^{44})$&$=$&$127$ & $2^{44}$&$=$&$4\cdot(1,6,15,20,15,6,1)_{127}$\\
45& 5& 9& 0&$b(2^{45})$&$=$&$511$ & $2^{45}$&$=$&$(1,5,10,10,5,1)_{511}$\\
46& 5& 9& 1&$b(2^{46})$&$=$&$511$ & $2^{46}$&$=$&$2\cdot(1,5,10,10,5,1)_{511}$\\
47& 5& 9& 2&$b(2^{47})$&$=$&$511$ & $2^{47}$&$=$&$4\cdot(1,5,10,10,5,1)_{511}$\\
48& 6& 8& 0&$b(2^{48})$&$=$&$255$ & $2^{48}$&$=$&$(1,6,15,20,15,6,1)_{255}$\\
49& 7& 7& 0&$b(2^{49})$&$=$&$127$ & $2^{49}$&$=$&$(1,7,21,35,35,21,7,1)_{127}$\\
50& 7& 7& 1&$b(2^{50})$&$=$&$127$ & $2^{50}$&$=$&$2\cdot(1,7,21,35,35,21,7,1)_{127}$\\
51& 6& 8& 3&$b(2^{51})$&$=$&$255$ & $2^{51}$&$=$&$8\cdot(1,6,15,20,15,6,1)_{255}$\\
52& 5&10& 2&$b(2^{52})$&$=$&$1023$ & $2^{52}$&$=$&$4\cdot(1,5,10,10,5,1)_{1023}$\\
53& 5&10& 3&$b(2^{53})$&$=$&$1023$ & $2^{53}$&$=$&$8\cdot(1,5,10,10,5,1)_{1023}$\\
54& 6& 9& 0&$b(2^{54})$&$=$&$511$ & $2^{54}$&$=$&$(1,6,15,20,15,6,1)_{511}$\\
55& 6& 9& 1&$b(2^{55})$&$=$&$511$ & $2^{55}$&$=$&$2\cdot(1,6,15,20,15,6,1)_{511}$\\
56& 8& 7& 0&$b(2^{56})$&$=$&$127$ & $2^{56}$&$=$&$(1,8,28,56,70,56,28,8,1)_{127}$\\
57& 7& 8& 1&$b(2^{57})$&$=$&$255$ & $2^{57}$&$=$&$2\cdot(1,7,21,35,35,21,7,1)_{255}$\\
58& 7& 8& 2&$b(2^{58})$&$=$&$255$ & $2^{58}$&$=$&$4\cdot(1,7,21,35,35,21,7,1)_{255}$\\
59& 5&11& 4&$b(2^{59})$&$=$&$2047$ & $2^{59}$&$=$&$16\cdot(1,5,10,10,5,1)_{2047}$\\
60& 6&10& 0&$b(2^{60})$&$=$&$1023$ & $2^{60}$&$=$&$(1,6,15,20,15,6,1)_{1023}$\\
61& 6&10& 1&$b(2^{61})$&$=$&$1023$ & $2^{61}$&$=$&$2\cdot(1,6,15,20,15,6,1)_{1023}$\\
62& 6&10& 2&$b(2^{62})$&$=$&$1023$ & $2^{62}$&$=$&$4\cdot(1,6,15,20,15,6,1)_{1023}$\\
\R{63}&\R{ 9}&\R{ 7}&\R{ 0}&$b(2^{63})$&$=$&$127$ & $2^{63}$&$=$&$(1,9,36,84,126,126,84,36,9,1)_{127}$\\
64& 8& 8& 0&$b(2^{64})$&$=$&$255$ & $2^{64}$&$=$&$(1,8,28,56,70,56,28,8,1)_{255}$\\
\hline
\end{tabular}
}
\end{center}
\end{table}

We also have computed all palindromic representations of $2^n$ 
for every positive integer $n < 64.$
Perhaps surprisingly, these computations show that every palindromic representation 
of $2^n$ is either of binomial form or has three digits.

\section{Some Extensions}

\begin{theorem}\label{Leonard-p}
If $p$ is a prime and  $N=p^n$ has a palindromic representation
$N=(c_k, c_{k-1},\ldots,c_0)_b$ with $k$ odd,
then $b=p^x-1$ for some $x$.
\end{theorem}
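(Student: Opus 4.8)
The plan is to reduce this to Lemma~\ref{Marco}, exactly as Corollary~\ref{Leonard-2} was obtained for $N=2^n$. The representation $(c_k,c_{k-1},\ldots,c_0)_b$ has $k+1$ digits, and since $k$ is odd this is an even number of digits: writing $k=2m+1$, the representation has the shape $(c_{2m+1},c_{2m},\ldots,c_1,c_0)_b$, which is precisely the form to which Lemma~\ref{Marco} applies (its ``$2k+1$'' is our odd $k$). Hence Lemma~\ref{Marco} gives that $b+1$ divides $N=p^n$.

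Next I would invoke the hypothesis that $p$ is prime. Every divisor of the prime power $p^n$ is again a power of $p$, and since $b>1$ we have $b+1\geq 3>1$, so $b+1=p^x$ for some integer $x$ with $1\leq x\leq n$. Therefore $b=p^x-1$, as claimed.

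I expect no genuine obstacle here; the argument is a one-line generalization of Corollary~\ref{Leonard-2}. The only points requiring (minimal) care are the bookkeeping of the digit count---checking that ``$k$ odd'' matches the parity of the index ``$2k+1$'' in the statement of Lemma~\ref{Marco}, so that $N$ indeed has an even number of digits---and the elementary fact that a divisor of a prime power is itself a power of that prime. It may also be worth remarking, as with Corollary~\ref{Leonard-2}, that the conclusion can fail when $k$ is even: the three-digit examples for $p=2$ in Table~\ref{k=2tab} (e.g.\ $2^{12}=(11,6,11)_{19}$) show the parity hypothesis is essential.
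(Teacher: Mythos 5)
Your proof is correct and is essentially identical to the paper's: both apply Lemma~\ref{Marco} (noting that $k$ odd means an even number of digits) to conclude $b+1$ divides $p^n$, and then use that every divisor of a prime power exceeding $1$ is a power of $p$. Your additional remarks on the parity bookkeeping and the necessity of the hypothesis are accurate but not needed.
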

\begin{proof}
Applying Lemma~\ref{Marco}, we see that $b+1$ divides $p^n$.
Therefore $b=p^x-1$, for some $x$.
\end{proof}

An obvious theorem whose proof we leave for the reader is Theorem~\ref{Easy}.

\begin{theorem}\label{Easy}
For any positive integer $z$.
\begin{enumerate}[label={\textup{(\Roman*)}}]
\item $z^n$ has the $(n+1)$-digit representation $z = (1,0,0,...,0)_z$.
\item $z^n+1$ has the $(n+1)$-digit palindromic representation $z = (1,0,0,...,1)_z$.
\item $z^n-1$ has the $n$-digit palindromic representation $z = (1,1,1,...,1)_z$.
\end{enumerate}
\end{theorem}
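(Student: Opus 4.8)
The plan is to prove Theorem~\ref{Easy} directly from the definition of a base-$z$ representation recalled in the Introduction: for each of the three parts one simply evaluates the stated digit string as $\sum_i c_i z^i$, and, where a \emph{palindromic} representation is claimed, one checks in addition that the digit sequence satisfies $c_i = c_{k-i}$. No earlier result is needed beyond the fact that $N=\sum_{i=0}^k c_iz^i$ with $0\le c_i<z$ is a legitimate base-$z$ representation, and we will tacitly assume $z\ge 2$ (so that $1<z$ is a genuine base and $0,1,z-1$ are legal digits) and $n\ge 1$.

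For (I), the numeral $(1,0,0,\dots,0)_z$ with $n$ trailing zeros evaluates to $1\cdot z^{n}+\sum_{i=0}^{n-1}0\cdot z^{i}=z^{n}$, and it has $n+1$ digits with nonzero leading digit; since (I) asserts only a representation and not a palindromic one, nothing further is required. For (II), the $(n+1)$-digit numeral $(1,0,\dots,0,1)_z$ (an interior block of $n-1$ zeros) evaluates to $z^{n}+0+\dots+0+1=z^{n}+1$, and its digit sequence $c_n=1$, $c_{n-1}=\dots=c_1=0$, $c_0=1$ plainly satisfies $c_i=c_{n-i}$ for every $i$, so it is an $(n+1)$-digit palindromic representation.

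For (III), the verification is the finite geometric sum: reading the claimed $n$-digit numeral with every digit equal to $z-1$, namely $(z-1,z-1,\dots,z-1)_z$, one gets $(z-1)\sum_{i=0}^{n-1}z^{i}=z^{n}-1$; the leading digit $z-1$ is nonzero, and since all digits coincide the string is trivially a palindrome. (Note that the repeated digit must be $z-1$ rather than $1$ for this to work: $(1,1,\dots,1)_z$ would instead evaluate to $\sum_{i=0}^{n-1}z^{i}=(z^{n}-1)/(z-1)$, which equals $z^{n}-1$ only when $z=2$.) Since the statement is entirely elementary there is no genuine obstacle; the only points to get right are the digit counts ($n$ versus $n+1$ in the three parts), insisting on a nonzero leading digit, and using $z-1$ as the repeated digit in part (III).
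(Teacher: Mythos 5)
The paper offers no proof of Theorem~\ref{Easy} (it is explicitly ``left for the reader''), so there is nothing to compare against line by line; your direct verification by evaluating $\sum_i c_i z^i$ is exactly the intended argument, and parts (I) and (II) are handled correctly, including the digit counts and the palindrome check. The one substantive point in your write-up is your observation about (III), and you are right: as printed, $(1,1,\dots,1)_z=\sum_{i=0}^{n-1}z^i=\frac{z^n-1}{z-1}$, which equals $z^n-1$ only for $z=2$, so the literal statement of (III) is false for $z>2$ and the repeated digit must be $z-1$. This reading of the all-ones numeral is confirmed by the paper's own later use of $(1,1,\dots,1)_x=\frac{x^n-1}{x-1}$ in the Nagell--Ljunggren discussion, so you have identified a genuine (if minor) erratum rather than misread the notation. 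Your corrected version $(z-1,z-1,\dots,z-1)_z=z^n-1$ still yields the consequence the paper actually needs, namely $b(2^n\pm 1)=2$, since for $z=2$ the digit $z-1$ is $1$. In short: your proof is correct, and (III) should be restated with digits $z-1$.
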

\medskip
\noindent
Note that Theorem~\ref{Easy} implies $b(2^n\pm 1)=2$.

\medskip

A solution $(x,y,n,q)$
to the Nagell-Ljunggren Diophantine equation
\begin{equation}\label{NL}
\frac{x^n-1}{x-1} = y^q
\end{equation}
is equivalent to the $n$-digit palindromic representation 
\[ 
y^q = (1,1,1,...,1)_z.
\]
Bugeaud and Mih\u{a}ilescu studied the Nagell-Ljunggren Diophantine equation
and obtained the following theorem.
\begin{theorem}[Bugeaud and Mih\u{a}ilescu~\cite{BugeaudMihailescu}]
Apart from the solutions
\[
  11^2 = (1,1,1,1,1)_3,\qquad
  20^2 = (1,1,1,1)_7\quad
  \text{and}\quad
  7^3 = (1,1,1)_{18},
\]
equation~\textup{(\ref{NL})} has no other solution $(x,y,n,q)$ if any of 
the following conditions are satisfied:
\begin{enumerate}[label={\textup{(\roman*)}}]
\item $q=2$,
\item $3$ divides $n$,
\item $4$ divides $n$,
\item $q=3$ and $n\not\equiv 5 \pmod{6}.$
\end{enumerate}
\end{theorem}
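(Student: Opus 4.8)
The plan is to treat conditions (i)--(iv) separately, since they call on genuinely different tools, while exploiting throughout the multiplicative structure of the repunit $R_n(x) := \frac{x^n-1}{x-1} = 1 + x + \cdots + x^{n-1}$. The basic observation is that for every divisor $d\mid n$ one has $R_n(x) = R_d(x)\cdot R_{n/d}(x^d)$, and that a prime dividing both factors must divide $n/d$ (reduce the second factor modulo a prime divisor of the first, where $x^d\equiv 1$). Thus one can peel off an explicit low-degree factor of $R_n(x)$ and, after a $\gcd$ analysis whose only ``loss'' is supported on the primes dividing $n$, reduce to asserting that a small polynomial in $x$ is, up to a bounded factor, a perfect $q$-th power.

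For condition (ii) I would take $d = n/3$, so $R_n(x) = R_{n/3}(x)\cdot\bigl((x^{n/3})^2 + x^{n/3} + 1\bigr)$ with the two factors sharing only a factor of $3$. Since $(x^{n/3})^2 < (x^{n/3})^2 + x^{n/3} + 1 < (x^{n/3}+1)^2$, the right-hand factor lies strictly between consecutive squares; this disposes of all even $q$ at once, and for odd $q$ it reduces the whole problem to the ternary equation $z^2 + z + 1 = (\text{$q$-th power, up to a factor }3)$ in $z = x^{n/3}$, whose only non-trivial solution is $18^2 + 18 + 1 = 7^3$ --- precisely the listed exception. Condition (iii) is handled the same way with $d = n/4$: then $R_4(x) = (x+1)(x^2+1)$ appears as a factor, its two parts are coprime up to $2$, and since $x^2 + 1$ is strictly between $x^2$ and $(x+1)^2$ it is never a square and only exceptionally a higher power, which together with a Pell-equation finiteness argument forces the remaining solutions. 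Condition (i), $q = 2$, is Ljunggren's theorem: the factorization above reduces it to $n$ a power of $2$ (where $x^{2^j}+1$ lying between consecutive squares kills everything past $n = 4$, and $n = 4$ is the near-elementary case $(x+1)(x^2+1) = \square$ giving $x = 7$) and to $n$ an odd prime, where I would run Skolem's $p$-adic method --- or, in modern dress, effective bounds for integral points on the associated hyperelliptic curve --- to extract the single solution $R_5(3) = 11^2$.

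The genuinely deep case is (iv), $q = 3$ with $n\not\equiv 5\pmod 6$; here no elementary factorization helps and one must pass to the cyclotomic field $\mathbb{Q}(\zeta_n)$. Following Mih\u{a}ilescu's refinement of the circle of ideas from the proof of Catalan's conjecture, one writes the ideal identity $(x - \zeta_n) = \mathfrak{a}^{\,3}$ up to a unit and a contribution from the primes above $n$, uses Stickelberger's theorem and information on the plus part of the class group of $\mathbb{Q}(\zeta_n)$ to constrain $\mathfrak{a}$, and then derives a contradiction --- for all but explicitly bounded $x$ --- from a ``logarithmic derivative''/size estimate, the residual small cases being checked by hand. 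I expect this cyclotomic step to be the main obstacle: it is where the unavoidable heavy input (Stickelberger, class-group conditions, Mih\u{a}ilescu's non-vanishing argument) enters, and the congruence constraint encountered in that argument is exactly what forces one to exclude $n\equiv 5\pmod 6$.
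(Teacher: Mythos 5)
This theorem is not proved in the paper at all: it is quoted verbatim from Bugeaud and Mih\u{a}ilescu \cite{BugeaudMihailescu} as an imported result, so there is no internal proof to compare your attempt against. Judged on its own terms, your proposal is a reasonable map of where the proofs of (i)--(iv) live in the literature (Nagell's elementary factorization arguments for $3\mid n$ and $4\mid n$, Ljunggren's theorem for $q=2$, and the cyclotomic/Stickelberger machinery for $q=3$), but it is an outline of that literature rather than a proof: at every decisive juncture the argument is asserted, not carried out.

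Two gaps are worth naming concretely. First, in case (ii) the claim that placing $z^2+z+1$ (with $z=x^{n/3}$) strictly between consecutive squares ``disposes of all even $q$ at once'' does not go through: the $\gcd$ loss at the prime $3$ means the relevant factor need only be a square up to a factor of $3$, and the Pell-type equation $z^2+z+1=3w^2$ has infinitely many solutions (e.g.\ $z=22$, $w=13$), so a genuine further descent --- the actual content of Nagell's argument --- is required to show none of them has $z$ a perfect power $x^{n/3}$ compatible with the first factor; the same issue recurs in case (iii) with the prime $2$ and the factor $x^2+1$. Second, the two hard cases are left entirely to cited machinery: for $q=2$ with $n$ an odd prime you invoke ``Skolem's $p$-adic method or effective bounds for integral points'' without executing either, and for (iv) you describe the shape of Mih\u{a}ilescu's cyclotomic argument (Stickelberger, class-group input, the exclusion of $n\equiv 5\pmod 6$) while acknowledging you cannot supply it. Since those steps are precisely where the theorem's difficulty is concentrated, the proposal does not constitute a proof; for the purposes of this paper the correct move is simply to cite the result, as the authors do.
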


We conclude this section with some computational results that are presented in  Tables \ref{tab5} and \ref{tab4} in the Appendix.

\section{Concluding remarks}
We have several conjectures:
\begin{enumerate}[label={(\alph *)}]
\item
$b(2^n) = 2^x-1$, for some $x$.
\item
The minimum palindromic representation of $2^n$ has 
binomial form.
\item $b(2^{a^2}) = 2^a-1$.
\item For any base $b$ there are only finitely many integers $N=2^n$
such that $b(N)=b$.
\item $b(2^n)=3$ if and only if $n=1,2,3$ or $4$.
\end{enumerate}

\section{Acknowledgements}
Every year on his birthday, the first author reports to Marco Buratti of Rome some amusing palindromic connection. (Marco is obsessed with palindromes.) This past year (2023) he reported that
he turned 68 which is a palindrome in base 3 and also in base 16.
The interlocution quickly turned to a discussion on what is 
the smallest base  such that N is a palindrome in that base.
Data was generated and on 22 September 2023, Marco wrote:
\begin{quotation}
\noindent%
Dear Don,\\
thank you for your interesting message!\\
Is $b(2^n)$ always of the form $2^i-1$?\\
Maybe my question is stupid.\\
Ciao,\\
Marco
\end{quotation}
It turns out that it is not so stupid after all.

\pagebreak

\appendix

\section*{Appendix}

\begin{table}[htbp]
\begin{center} 
\caption%
{Representations of $2^n$ in the base 3 for $n \leq 25$.\\
Non-palindromic forms are marked in red.}
\label{tab5}
\begin{tabular}{@{}l@{\hspace{2pt}}c@{\hspace{2pt}}l@{}}
\hline
$2^{1}$&$=$&$2\cdot(1)_{3}$\\
$2^{2}$&$=$&$(1,1)_{3}$\\
$2^{3}$&$=$&$2\cdot(1,1)_{3}$\\
$2^{4}$&$=$&$(1,2,1)_{3}$\\
$2^{5}$&$=$&\R{$(1,0,1,2)_{3}$}\\
$2^{6}$&$=$&\R{$(2,1,0,1)_{3}$}\\
$2^{7}$&$=$&\R{$(1,1,2,0,2)_{3}$}\\
$2^{8}$&$=$&\R{$(1,0,0,1,1,1)_{3}$}\\
$2^{9}$&$=$&\R{$2\cdot(1,0,0,1,1,1)_{3}$}\\
$2^{10}$&$=$&\R{$(1,1,0,1,2,2,1)_{3}$}\\
$2^{11}$&$=$&\R{$(2,2,1,0,2,1,2)_{3}$}\\
$2^{12}$&$=$&\R{$(1,2,1,2,1,2,0,1)_{3}$}\\
$2^{13}$&$=$&\R{$(1,0,2,0,2,0,1,0,2)_{3}$}\\
$2^{14}$&$=$&\R{$(2,1,1,1,1,0,2,1,1)_{3}$}\\
$2^{15}$&$=$&\R{$(1,1,2,2,2,2,1,1,2,2)_{3}$}\\
$2^{16}$&$=$&\R{$(1,0,0,2,2,2,2,0,0,2,1)_{3}$}\\
$2^{17}$&$=$&\R{$(2,0,1,2,2,2,1,0,1,1,2)_{3}$}\\
$2^{18}$&$=$&\R{$(1,1,1,0,2,2,1,2,1,0,0,1)_{3}$}\\
$2^{19}$&$=$&\R{$(2,2,2,1,2,2,0,1,2,0,0,2)_{3}$}\\
$2^{20}$&$=$&\R{$(1,2,2,2,0,2,1,1,0,1,0,1,1)_{3}$}\\
$2^{21}$&$=$&\R{$(1,0,2,2,1,1,1,2,2,0,2,0,2,2)_{3}$}\\
$2^{22}$&$=$&\R{$(2,1,2,2,0,0,0,2,1,1,1,1,2,1)_{3}$}\\
$2^{23}$&$=$&\R{$(1,2,0,2,1,0,0,1,2,0,0,0,0,1,2)_{3}$}\\
$2^{24}$&$=$&\R{$(1,0,1,1,1,2,0,1,0,1,0,0,0,1,0,1)_{3}$}\\
$2^{25}$&$=$&\R{$(2,1,0,0,0,1,0,2,0,2,0,0,0,2,0,2)_{3}$}\\
$2^{26}$&$=$&\R{$(1,1,2,0,0,0,2,1,1,1,1,0,0,1,1,1,1)_{3}$}\\
$2^{27}$&$=$&\R{$(1,0,0,1,0,0,1,1,2,2,2,2,0,0,2,2,2,2)_{3}$}\\
$2^{28}$&$=$&\R{$(2,0,0,2,0,1,0,0,2,2,2,1,0,1,2,2,2,1)_{3}$}\\
$2^{29}$&$=$&\R{$(1,1,0,1,1,0,2,0,1,2,2,1,2,1,0,2,2,1,2)_{3}$}\\
$2^{30}$&$=$&\R{$(2,2,0,2,2,1,1,1,0,2,2,0,1,2,1,2,2,0,1)_{3}$}\\
\hline
\end{tabular}
\end{center}
\end{table}

\begin{table}[htbp]
\begin{center} 
\caption%
{Palindromic representations of $p^n < 2^{30}$, where $3 \leq p \leq 29$ is prime.\\
Non-binomial forms are marked in red.}
\label{tab4}
\begin{tabular}{
 @{}
 l@{\hspace{2pt}}
 c@{\hspace{2pt}}
 r
 l@{\hspace{2pt}}
 c@{\hspace{2pt}}
 l
 @{}
}
\hline
$b(3^{1})$&$=$&$2$&$3^{1}$&$=$&$(1,1)_{2}$\\
$b(3^{2})$&$=$&$2$&$3^{2}$&$=$&\R{$(1,0,0,1)_{2}$}\\
$b(3^{3})$&$=$&$2$&$3^{3}$&$=$&\R{$(1,1,0,1,1)_{2}$}\\
$b(3^{4})$&$=$&$8$&$3^{4}$&$=$&$(1,2,1)_{8}$\\
$b(3^{5})$&$=$&$8$&$3^{5}$&$=$&$3\cdot(1,2,1)_{8}$\\
$b(3^{6})$&$=$&$8$&$3^{6}$&$=$&$(1,3,3,1)_{8}$\\
$b(3^{7})$&$=$&$24$&$3^{7}$&$=$&\R{$(3,19,3)_{24}$}\\
$b(3^{8})$&$=$&$8$&$3^{8}$&$=$&$(1,4,6,4,1)_{8}$\\
$b(3^{9})$&$=$&$26$&$3^{9}$&$=$&$(1,3,3,1)_{26}$\\
$b(3^{10})$&$=$&$26$&$3^{10}$&$=$&$3\cdot(1,3,3,1)_{26}$\\
$b(3^{11})$&$=$&$80$&$3^{11}$&$=$&$27\cdot(1,2,1)_{80}$\\
$b(3^{12})$&$=$&$26$&$3^{12}$&$=$&$(1,4,6,4,1)_{26}$\\
$b(3^{13})$&$=$&$26$&$3^{13}$&$=$&$3\cdot(1,4,6,4,1)_{26}$\\
$b(3^{14})$&$=$&$80$&$3^{14}$&$=$&$9\cdot(1,3,3,1)_{80}$\\
$b(3^{15})$&$=$&$26$&$3^{15}$&$=$&$(1,5,10,10,5,1)_{26}$\\
$b(3^{16})$&$=$&$80$&$3^{16}$&$=$&$(1,4,6,4,1)_{80}$\\
$b(3^{17})$&$=$&$80$&$3^{17}$&$=$&$3\cdot(1,4,6,4,1)_{80}$\\
$b(3^{18})$&$=$&$26$&$3^{18}$&$=$&$(1,6,15,20,15,6,1)_{26}$\\
$b(5^{1})$&$=$&$2$&$5^{1}$&$=$&\R{$(1,0,1)_{2}$}\\
$b(5^{2})$&$=$&$4$&$5^{2}$&$=$&$(1,2,1)_{4}$\\
$b(5^{3})$&$=$&$4$&$5^{3}$&$=$&$(1,3,3,1)_{4}$\\
$b(5^{4})$&$=$&$24$&$5^{4}$&$=$&$(1,2,1)_{24}$\\
$b(5^{5})$&$=$&$24$&$5^{5}$&$=$&$5\cdot(1,2,1)_{24}$\\
$b(5^{6})$&$=$&$24$&$5^{6}$&$=$&$(1,3,3,1)_{24}$\\
$b(5^{7})$&$=$&$24$&$5^{7}$&$=$&$5\cdot(1,3,3,1)_{24}$\\
$b(5^{8})$&$=$&$24$&$5^{8}$&$=$&$(1,4,6,4,1)_{24}$\\
$b(5^{9})$&$=$&$124$&$5^{9}$&$=$&$(1,3,3,1)_{124}$\\
$b(5^{10})$&$=$&$24$&$5^{10}$&$=$&$(1,5,10,10,5,1)_{24}$\\
$b(5^{11})$&$=$&$124$&$5^{11}$&$=$&$25\cdot(1,3,3,1)_{124}$\\
$b(5^{12})$&$=$&$24$&$5^{12}$&$=$&$(1,6,15,20,15,6,1)_{24}$\\
$b(7^{1})$&$=$&$2$&$7^{1}$&$=$&\R{$(1,1,1)_{2}$}\\
$b(7^{2})$&$=$&$6$&$7^{2}$&$=$&$(1,2,1)_{6}$\\
$b(7^{3})$&$=$&$6$&$7^{3}$&$=$&$(1,3,3,1)_{6}$\\
$b(7^{4})$&$=$&$18$&$7^{4}$&$=$&\R{$7\cdot(1,1,1)_{18}$}\\
$b(7^{5})$&$=$&$38$&$7^{5}$&$=$&\R{$(11,24,11)_{38}$}\\
$b(7^{6})$&$=$&$18$&$7^{6}$&$=$&\R{$(1,2,3,2,1)_{18}$}\\
$b(7^{7})$&$=$&$48$&$7^{7}$&$=$&$7\cdot(1,3,3,1)_{48}$\\
$b(7^{8})$&$=$&$48$&$7^{8}$&$=$&$(1,4,6,4,1)_{48}$\\
$b(7^{9})$&$=$&$18$&$7^{9}$&$=$&\R{$(1,3,6,7,6,3,1)_{18}$}\\
$b(7^{10})$&$=$&$48$&$7^{10}$&$=$&$(1,5,10,10,5,1)_{48}$\\
$b(11^{1})$&$=$&$10$&$11^{1}$&$=$&$(1,1)_{10}$\\
\hline
\end{tabular}
\begin{tabular}{
 @{}
 l@{\hspace{2pt}}
 c@{\hspace{2pt}}
 r
 l@{\hspace{2pt}}
 c@{\hspace{2pt}}
 l
 @{}
}
\hline
$b(11^{2})$&$=$&$3$&$11^{2}$&$=$&\R{$(1,1,1,1,1)_{3}$}\\
$b(11^{3})$&$=$&$10$&$11^{3}$&$=$&$(1,3,3,1)_{10}$\\
$b(11^{4})$&$=$&$10$&$11^{4}$&$=$&$(1,4,6,4,1)_{10}$\\
$b(11^{5})$&$=$&$56$&$11^{5}$&$=$&\R{$(51,19,51)_{56}$}\\
$b(11^{6})$&$=$&$35$&$11^{6}$&$=$&\R{$(1,6,11,6,1)_{35}$}\\
$b(11^{7})$&$=$&$120$&$11^{7}$&$=$&$11\cdot(1,3,3,1)_{120}$\\
$b(11^{8})$&$=$&$120$&$11^{8}$&$=$&$(1,4,6,4,1)_{120}$\\
$b(13^{1})$&$=$&$3$&$13^{1}$&$=$&\R{$(1,1,1)_{3}$}\\
$b(13^{2})$&$=$&$12$&$13^{2}$&$=$&$(1,2,1)_{12}$\\
$b(13^{3})$&$=$&$12$&$13^{3}$&$=$&$(1,3,3,1)_{12}$\\
$b(13^{4})$&$=$&$12$&$13^{4}$&$=$&$(1,4,6,4,1)_{12}$\\
$b(13^{5})$&$=$&$12$&$13^{5}$&$=$&$(1,5,10,10,5,1)_{12}$\\
$b(13^{6})$&$=$&$168$&$13^{6}$&$=$&$(1,3,3,1)_{168}$\\
$b(13^{7})$&$=$&$168$&$13^{7}$&$=$&$13\cdot(1,3,3,1)_{168}$\\
$b(13^{8})$&$=$&$168$&$13^{8}$&$=$&$(1,4,6,4,1)_{168}$\\
$b(17^{1})$&$=$&$2$&$17^{1}$&$=$&\R{$(1,0,0,0,1)_{2}$}\\
$b(17^{2})$&$=$&$4$&$17^{2}$&$=$&\R{$(1,0,2,0,1)_{4}$}\\
$b(17^{3})$&$=$&$4$&$17^{3}$&$=$&\R{$(1,0,3,0,3,0,1)_{4}$}\\
$b(17^{4})$&$=$&$16$&$17^{4}$&$=$&$(1,4,6,4,1)_{16}$\\
$b(17^{5})$&$=$&$16$&$17^{5}$&$=$&$(1,5,10,10,5,1)_{16}$\\
$b(17^{6})$&$=$&$63$&$17^{6}$&$=$&\R{$(1,33,33,33,1)_{63}$}\\
$b(17^{7})$&$=$&$288$&$17^{7}$&$=$&$17\cdot(1,3,3,1)_{288}$\\
$b(19^{1})$&$=$&$18$&$19^{1}$&$=$&$(1,1)_{18}$\\
$b(19^{2})$&$=$&$15$&$19^{2}$&$=$&\R{$(1,9,1)_{15}$}\\
$b(19^{3})$&$=$&$18$&$19^{3}$&$=$&$(1,3,3,1)_{18}$\\
$b(19^{4})$&$=$&$18$&$19^{4}$&$=$&$(1,4,6,4,1)_{18}$\\
$b(19^{5})$&$=$&$18$&$19^{5}$&$=$&$(1,5,10,10,5,1)_{18}$\\
$b(19^{6})$&$=$&$360$&$19^{6}$&$=$&$(1,3,3,1)_{360}$\\
$b(19^{7})$&$=$&$360$&$19^{7}$&$=$&$19\cdot(1,3,3,1)_{360}$\\
$b(23^{1})$&$=$&$3$&$23^{1}$&$=$&\R{$(2,1,2)_{3}$}\\
$b(23^{2})$&$=$&$22$&$23^{2}$&$=$&$(1,2,1)_{22}$\\
$b(23^{3})$&$=$&$22$&$23^{3}$&$=$&$(1,3,3,1)_{22}$\\
$b(23^{4})$&$=$&$22$&$23^{4}$&$=$&$(1,4,6,4,1)_{22}$\\
$b(23^{5})$&$=$&$22$&$23^{5}$&$=$&$(1,5,10,10,5,1)_{22}$\\
$b(23^{6})$&$=$&$22$&$23^{6}$&$=$&$(1,6,15,20,15,6,1)_{22}$\\
$b(29^{1})$&$=$&$4$&$29^{1}$&$=$&\R{$(1,3,1)_{4}$}\\
$b(29^{2})$&$=$&$21$&$29^{2}$&$=$&\R{$(1,19,1)_{21}$}\\
$b(29^{3})$&$=$&$28$&$29^{3}$&$=$&$(1,3,3,1)_{28}$\\
$b(29^{4})$&$=$&$28$&$29^{4}$&$=$&$(1,4,6,4,1)_{28}$\\
$b(29^{5})$&$=$&$28$&$29^{5}$&$=$&$(1,5,10,10,5,1)_{28}$\\
$b(29^{6})$&$=$&$28$&$29^{6}$&$=$&$(1,6,15,20,15,6,1)_{28}$\\
\hline
\end{tabular}
\end{center}
\end{table}

\end{document}